\newtheorem{theorem}{\bf Theorem}[section]
\newtheorem{lemma}[theorem]{\bf Lemma}
\newtheorem{prop}[theorem]{\bf Proposition}
\newtheorem{conj}[theorem]{\bf Conjecture}
\newtheorem{question}[theorem]{\bf Question}
\theoremstyle{definition}
\newtheorem{defi}[theorem]{\bf Definition}
\newcommand{\AG}{\mathrm{AG}}
\newcommand{\Z}{\mathbb{Z}}
\newcommand{\PG}{\mathrm{PG}}
\newcommand{\F}{\mathbb{F}}
\newcommand{\1}{\mathbb{1}}
\newcommand{\Span}{\operatorname{Span}}
\title{Cylinder type and $p$-divisible sets in $\mathbb{F}_p^3$}
\author{Gergely Kiss\thanks{Corvinus University of Budapest, Department of Mathematics,
Budapest, Hungary, and HUN-REN Alfr\'ed R\'enyi Institute of Mathematics, Budapest, Hungary. The author is supported by National Research, Development and Innovation Fund, OTKA grants no. FK 142993 and Starting 150576.
E-mail: {\tt kigergo57@gmail.com}} %\todo{affiliáció/mail/támogat}}
\and Ádám Markó \thanks{E\"otv\"os Lor\'and University, Budapest, Hungary.
The author is supported by OTKA grants no. FK 142993 and Starting 150576.
E-mail: {\tt marqadam@gmail.com}} %\todo{affiliáció/mail/támogató}
\and Zolt\'an L\'or\'ant Nagy\thanks{ELTE Linear Hypergraphs  Research Group,
 E\"otv\"os Lor\'and University, Budapest, Hungary. The author is supported by the J\'anos Bolyai Scholarship of the Hungarian Academy of Sciences and by the NRDI EXCELLENCE-24 grant no. 151504 Combinatorics and Geometry. %Hungarian Research Grant (NKFIH) No. PD  134953 and no. 124950.  
 E-mail: {\tt zoltan.lorant.nagy@ttk.elte.hu}}
 \and Gábor Somlai \thanks{The University of Melbourne, Parkville, VIC, 3010
Australia. G. S. is on an unpaid leave at the Eötvös Loránd University and was supported by OTKA grant no. SNN 152582, Starting 150576.
E-mail: { \tt gabor.somlai@unimelb.edu.au } }}
\date{}
\begin{document}
\maketitle

\begin{abstract} 
A set of points $S \subseteq \mathbb{F}_p^n$ is called \emph{$p$-divisible} if every affine hyperplane in $\mathbb{F}_p^n$ intersects $S$ in  $0 \pmod p$ points.
The Strong Cylinder Conjecture of Ball asserts that  if
 $S$ is a $p$-divisible set of $p^2$ points in $\mathbb{F}_p^3$, then $S$ is a cylinder. In this paper, we show that every $p$-divisible multiset $S$ is both a $\mathbb{F}_p$-linear and $\mathbb{Z}$-linear combination of characteristic functions of cylinders. In addition, the multisets of size $p^2$ are $\Z$-linear combinations of a plane and weighted differences of parallel lines.  
 %provided that the conditions of the strong cylinder conjecture hold. 
    
\end{abstract}

\bigskip
\noindent\textbf{Keywords:} Cylinder set, $p$-divisibility, reduced degree polynomials, tiling

\medskip
\noindent\textbf{MSC (2020):} Primary 05B25, 51E20 Secondary 05B45.

%\section{Introduction}
\section{Introduction}

Let $\AG(n,q)$ denote the $n$-dimensional affine space over the finite field $\mathbb{F}_q$ and  $\PG(n,q)$ the $n$-dimensional projective space over $\mathbb{F}_q$. One can embed the affine space $\AG(n,q)$ into the projective space $\PG(n,q)$. The points of $\PG(n,q)$ that are not in $\AG(n,q)$ form a hyperplane at infinity, denoted by $H_\infty \cong \PG(n-1,q)$. 

For convenience, from now on we identify $\AG(n,q)$ with the vector space $\mathbb{F}_q^n$.  

In the study of finite projective and affine spaces, a fundamental theme is to deduce the structure of a point set from partial information about its combinatorial properties. A central topic in this area is the \emph{direction problem}, which investigates the relationship between the size and the structure of a point set in an affine space and the number of directions determined by the set. Formally, the set of determined directions is defined as follows.

\begin{defi}[Determined directions]
For a set of points $S \subseteq \mathbb{F}_q^n$, the set of \emph{directions $D_S$ determined by $S$} is the set of points at infinity corresponding to the lines passing through at least two points of $S$. The complementary set with respect to $H_{\infty}$ is the set of \emph{non-determined directions}, denoted by $N_S$.
\end{defi}

The direction problem seeks to determine, for a given size $|S|$, the smallest possible number of directions defined by a point set $S$ under certain conditions, and to describe the sets that achieve this minimum. Observe first that if $|S| > q^{n-1}$, then $S$ determines all directions; that is, $D_S = H_\infty$. Hence, the most interesting case arises when $|S| = q^{n-1}$.

With its roots in the work of Rédei \cite{redei}, this problem has led to many deep results and challenging open questions. Most of the known results concern the planar case ($n=2$), see for instance \cite{ball2003, BBBSS, Blokhuis,  Fancsali, Gacs, Gacs2, Ghidelli, negyen, LS, somlai, Sziklai}. In higher dimensions, one of the most notable open problems is the \emph{cylinder conjecture} of Ball \cite{Ball_cilinder}.

\medskip
\noindent
\textbf{The Cylinder Conjecture.}
Simeon Ball proposed the following conjectures, in both a \emph{weak} and a \emph{strong} form, restricted to prime fields $\mathbb{F}_p$. %It was originally formulated in dimension $n=3$.
Let us first define cylinders.
\begin{defi}[Cylinder]
A set $S \subseteq \mathbb{F}_p^3$ is called a \emph{cylinder} if it is the union of $p$ pairwise disjoint parallel lines so it is a set of size $p^2$.
\end{defi}

\begin{conj}[Weak Cylinder Conjecture]
Let $S$ be a set of $p^2$ points in $\mathbb{F}_p^3$, where $p$ is a prime. If the number of non-determined directions of $S$ satisfies $|N_S| \ge p$, then $S$ is a cylinder.
\end{conj}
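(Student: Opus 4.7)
The plan is to combine a classical Rédei-style direction argument with the decomposition theorem announced in the abstract. The first step is to upgrade the hypothesis $|N_S|\ge p$ into full $p$-divisibility of $S$. Since $|S|=p^2$, each non-determined direction $d\in N_S$ yields a partition of $\F_p^3$ into $p^2$ parallel lines, each meeting $S$ in exactly one point. Following the Rédei polynomial method, each such direction imposes that the top-degree part (in the appropriate variable) of the polynomial associated with $S$ vanishes. Collecting $p$ independent such relations should force $\1_S$, written in the reduced basis of monomials of degree $<p$ in each variable, to be annihilated by enough hyperplane-evaluation operators to guarantee that every affine hyperplane meets $S$ in $0\pmod p$ points.

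The second step invokes the paper's structural result. Once $S$ is $p$-divisible, the announced theorem expresses
\[
\1_S \;=\; \alpha\,\1_\pi \;+\; \sum_{j} \beta_j\bigl(\1_{\ell_j}-\1_{\ell_j'}\bigr)
\]
as a $\Z$-linear combination, where $\pi$ is a plane and each pair $\ell_j,\ell_j'$ consists of parallel lines. Crucially $\1_S$ takes values in $\{0,1\}$ and its support has size exactly $p^2$, which is a rigid constraint on the otherwise free integer coefficients.

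The main obstacle, and the genuinely hard part, is passing from this $\Z$-linear decomposition to an honest cylinder. Weighted differences of parallel lines are precisely the terms that can cancel in pairs without contributing to the support, so the decomposition is very far from unique. My approach would be to leverage the direction hypothesis: each $d\in N_S$ forces every line of $\F_p^3$ in direction $d$ to contain exactly one point of $S$, which forbids certain configurations of the pairs $(\ell_j,\ell_j')$ (for instance, any pair meeting a single direction-$d$ line in two distinct points would unbalance the count along that line). A case analysis, split according to whether $(\ell_j,\ell_j')$ shares a common direction from $N_S$ or is transverse to it, combined with incidence counts against the $p$ parallel classes of lines in $N_S$, should force the decomposition to collapse so that $\1_S$ is the indicator of $p$ pairwise disjoint parallel lines, i.e.\ a cylinder.

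Since the Weak Cylinder Conjecture is a well-known open problem, I do not expect the outline above to go through without genuinely new ideas; the most delicate point is controlling the freedom in the $\Z$-linear decomposition, and one would likely need finer invariants of $S$, such as weighted line- or plane-moments along the non-determined directions, to rigidify the structure enough to rule out all non-cylinder examples and complete the argument.
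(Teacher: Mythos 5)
This statement is labelled as a \emph{conjecture} in the paper, and it is genuinely open for $p>7$: the paper does not prove it, and explicitly says so (``even the weak conjecture remains open for $p > 7$''). Your proposal therefore cannot be compared against a proof in the paper; what it can be compared against is the paper's actual contribution, which is precisely the \emph{relaxation} you describe in your first two steps. Step~1 of your outline (upgrading $|N_S|\ge p$ to $p$-divisibility when $|S|=p^2$) is a known lemma of Ball, quoted in the paper, so that part is fine, though your Rédei-polynomial sketch of it is vaguer than simply citing the result. Step~2 is Theorem~\ref{thm:cilinderZlinkomb} of the paper. The entire difficulty of the conjecture lives in your Step~3, and that step is missing.

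Concretely, the gap is this: the $\Z$-linear decomposition $\1_S=\1_\pi+\sum_j\beta_j(\1_{\ell_j}-\1_{\ell_j'})$ is enormously non-unique (the differences of parallel lines span a large lattice), and the hypothesis $|N_S|\ge p$ gives you no additional leverage at this stage, because the information it carries is exactly equivalent to the $p$-divisibility you already consumed in Step~1. Your proposed incidence-count case analysis on the pairs $(\ell_j,\ell_j')$ cannot work as stated, since the pairs are artifacts of one chosen representation rather than intrinsic features of $S$; any argument must instead extract representation-independent invariants, and no such invariants strong enough to force the cylinder structure are currently known. Moreover, the $q=8$ counterexample mentioned in the paper shows that any completion of Step~3 must use primality of $p$ in an essential way, which nothing in your outline does. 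You correctly diagnose all of this yourself in your final paragraph, so to be clear: what you have written is a restatement of the known reduction plus an honest admission that the conjecture remains open, not a proof.
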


Observe that the statement of the conjecture is much stronger than the 3-dimensional analogue of the Rédei--Lovász--Schrijver theorem \cite{LS}, proven by Sziklai \cite{Sziklai2}.

\begin{theorem}[Sziklai \cite{Sziklai2}]
Let $S \subseteq \mathbb{F}_p^3$ be a point set of size $|S| = p^2$ for $p > 3$. Then for the number of determined directions $D_S$, one of the following possibilities holds:
\begin{enumerate}
    \item $S$ is a plane and $|D_S| = p + 1$;
    \item $S$ is a cylinder with the (affine part of the) projective triangle as a base, and $|D_S| = \frac{(p + 1)(p+2)}{2}$;
    \item $|D_S| \ge \frac{p^2+5p}{2}$.
\end{enumerate}
\end{theorem}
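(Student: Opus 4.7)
My plan is to adapt Rédei's polynomial method to three dimensions. For each $\alpha\in\mathbb{F}_p$, consider the auxiliary Rédei polynomial
$$R_\alpha(X,Y,Z) \;=\; \prod_{(a,b,c)\in S}\bigl(X - (b-aY) - \alpha(c-aZ)\bigr),$$
of degree $p^2$ in $X$, with coefficient expansion $R_\alpha = \sum_{k=0}^{p^2} r_k^{(\alpha)}(Y,Z)\,X^{p^2-k}$ and $\deg r_k^{(\alpha)}\le k$. If $(1:y_0:z_0)\in N_S$ then the projection $(a,b,c)\mapsto(b-ay_0,\,c-az_0)$ is a bijection $S\to\mathbb{F}_p^2$, so for every $\alpha\in\mathbb{F}_p$ the roots of $R_\alpha(X,y_0,z_0)$ cover each element of $\mathbb{F}_p$ exactly $p$ times, giving
$$R_\alpha(X,y_0,z_0) \;=\; (X^p-X)^p.$$
Each element of $N_S$ therefore produces a full family of polynomial identities on the coefficients $r_k^{(\alpha)}$.

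The technical core is a \emph{lacunary lemma}: if $|N_S|>\frac{p^2-3p}{2}$, equivalently $|D_S|<\frac{p^2+5p}{2}$, then $r_1^{(\alpha)},\dots,r_{p-1}^{(\alpha)}$ vanish identically for every $\alpha$, and the next batch $r_p^{(\alpha)},\dots,r_{2p-1}^{(\alpha)}$ are $p$-th powers in $\mathbb{F}_p[Y,Z]$. Combined with $\deg r_k^{(\alpha)}\le k$, this forces $R_\alpha$ to split into linear forms whose slopes lie in a highly restricted algebraic subvariety of the direction variety, which in turn forces $S$ into one of two rigid structures: either $S$ lies in a single affine plane (so $|S|=p^2$ makes $S$ that plane, case~(1)), or $S$ is a cylinder, i.e.\ the union of $p$ parallel lines with common base $B\subseteq\mathbb{F}_p^2$ of size $p$.

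In the cylinder case, a direct count gives $|D_S|=p\,|D_B|+1$, and the planar Rédei--Lovász--Schrijver theorem \cite{LS} splits into three subcases: $|D_B|=1$ (so $B$ is a line and $S$ collapses to case~(1)); $|D_B|=\frac{p+3}{2}$, attained exactly when $B$ is the affine part of the projective triangle, producing $|D_S|=\frac{(p+1)(p+2)}{2}$ (case~(2)); or $|D_B|\ge\frac{p+5}{2}$, which pushes $|D_S|\ge\frac{p^2+5p}{2}+1$ (case~(3)). The hypothesis $p>3$ is used to ensure the strict planar gap of $\frac{p+3}{2}$ and the corresponding arithmetic inequalities.

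The principal obstacle will be the lacunary lemma itself: one must juggle the many identities from $N_S$ together with the bivariate nature of the coefficients $r_k^{(\alpha)}(Y,Z)$, and this is where most of the Rédei-style polynomial manipulation is concentrated. The subsequent classification into cases (1)--(3) then follows cleanly from the planar theorem, so I expect the decisive technical work to be entirely in establishing the vanishing and $p$-th-power structure of the low-order and middle-order coefficients.
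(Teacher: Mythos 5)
This theorem is quoted in the paper from Sziklai's article and is not proved there, so your attempt can only be judged on its own terms. As it stands it is a roadmap, not a proof: the entire mathematical content of the theorem is concentrated in what you call the ``lacunary lemma'' and in the subsequent claim that the lacunary structure ``forces $S$ into one of two rigid structures,'' and neither of these is argued. You yourself flag the lemma as the principal obstacle and leave it unproved. Concretely: (i) you give no argument that $|N_S|>\frac{p^2-3p}{2}$ forces $r_1^{(\alpha)},\dots,r_{p-1}^{(\alpha)}\equiv 0$ and the $p$-th-power property of the next block --- the classical lacunary-polynomial machinery is univariate, and your coefficients live in $\F_p[Y,Z]$, so one must either restrict to the pencil of planes through a non-determined point at infinity and run the planar theory there, or control how the identities $R_\alpha(X,y_0,z_0)=(X^p-X)^p$ for the many pairs $(\alpha,(y_0,z_0))$ interact; and (ii) even granting the lemma, the passage from ``$R_\alpha$ splits into linear forms with restricted slopes'' to ``$S$ is a plane or a union of $p$ parallel lines'' is precisely the structural heart of Sziklai's theorem and is asserted without any argument. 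A proof sketch that names the hard lemma and defers it is a plan, not a proof.

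The parts you do carry out are essentially correct and are the easy ones. The identity $R_\alpha(X,y_0,z_0)=(X^p-X)^p$ for a non-determined direction is right, as is the count $|D_S|=p\,|D_B|+1$ for a cylinder over a base $B$ of size $p$ (each determined base direction $\delta$ contributes the $p$ points of the infinite line through $\delta$ and the axis direction, other than the axis direction itself), and the case split via the planar R\'edei--Lov\'asz--Schrijver theorem ($|D_B|=1$, $|D_B|=\frac{p+3}{2}$ with the extremal characterization, or $|D_B|\ge\frac{p+5}{2}$) correctly reproduces cases (1)--(3). But this endgame only kicks in after one already knows $S$ is a plane or a cylinder, which is exactly what is missing.
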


\medskip
\noindent
The strong form of the conjecture is based on the algebraic property of $p$-divisibility.

\begin{defi}[$p$-divisible set]
A set of points $S \subseteq \mathbb{F}_p^n$ is called \emph{$p$-divisible} if every affine hyperplane in $\mathbb{F}_p^n$ intersects $S$ in a number of points congruent to $0 \pmod p$.
\end{defi}
%Note that the size of a $p$-divisible set is always divisible by $p$, but its cardinality is not necessarily $p^2$.
\begin{conj}[Strong Cylinder Conjecture]
Let $S$ be a set of $p^2$ points in $\mathbb{F}_p^3$, where $p$ is a prime. If $S$ is $p$-divisible, then $S$ is a cylinder.
\end{conj}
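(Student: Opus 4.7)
The plan is to start from the characteristic function $\1_S:\F_p^3\to\{0,1\}$ and recast $p$-divisibility as an algebraic condition. For every affine hyperplane $H=\{\mathbf{x}:\langle \mathbf{a},\mathbf{x}\rangle=c\}$, the condition $|S\cap H|\equiv 0\pmod p$ expands, via a character sum / Fourier expansion on $\F_p^3$, into a rich family of vanishing constraints on $\widehat{\1_S}$. Following the route announced in the abstract, I would use these constraints to obtain the multiset decomposition theorem: every $p$-divisible multiset is a $\Z$-linear combination of cylinder indicators, and for size $p^2$ specifically, it is a $\Z$-combination of the indicator of a plane $\Pi$ together with pairs $\1_{\ell_i}-\1_{\ell_i'}$ of indicators of parallel lines. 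This decomposition is the engine; the rest is rigidity.

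Assuming the decomposition
\[
\1_S \;=\; \alpha\,\1_\Pi \;+\; \sum_{i} \beta_i\bigl(\1_{\ell_i}-\1_{\ell_i'}\bigr),
\]
summing both sides forces $\alpha \cdot p^2 = p^2$, hence $\alpha = 1$. The correction must therefore equal $\1_{S\setminus\Pi}-\1_{\Pi\setminus S}$, so its signed support has $p^2-|S\cap\Pi|$ plus-points and the same number of minus-points. The cylinder case arises precisely when all $\ell_i,\ell_i'$ are parallel to a single direction $d\nparallel\Pi$: after cancellation, the differences swap $|\Pi\setminus S|/p$ full $d$-columns lying on $\Pi$ with the same number of $d$-columns off $\Pi$, producing a union of $p$ parallel $d$-lines. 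The task is then to exclude every other legal decomposition.

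The main difficulty, and the step where I expect to get stuck, is proving that no decomposition involving line pairs of mixed directions can produce a $\{0,1\}$-valued output of total weight $p^2$. The integer decomposition above is far from unique, so one must either extract a canonical (e.g.\ minimal-support or minimal-$\ell^1$) representative, or reason directly about the geometry: two line difference terms with non-parallel directions typically leave a residue at intersections that must be cancelled by yet more terms, propagating a growing incompatibility with the $\{0,1\}$ constraint. A promising route is to pair this analysis with a count of determined directions in the style of Sziklai's trichotomy, so that the alternative trichotomy branches (base a projective triangle, or $|D_S|\ge (p^2+5p)/2$) are incompatible with $p$-divisibility. Closing this rigidity gap is presumably the reason the paper stops at the multiset statement and leaves the passage from multiset to genuine $\{0,1\}$-set open.
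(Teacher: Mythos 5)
Your proposal does not prove the statement, and it cannot as written: the statement is the \emph{Strong Cylinder Conjecture}, which the paper does not prove either --- it is explicitly open for all primes $p>7$ and only computationally verified for $p\le 7$. What your sketch actually delivers (modulo details) is the paper's genuine contribution, namely the decomposition results: every $p$-divisible multiset of total weight $p^2$ equals $\1_\Pi$ plus a $\Z$-linear combination of differences $\1_{\ell}-\1_{\ell'}$ of parallel lines (Theorems~\ref{thm:cilinderZlinkomb} and~\ref{thm:multiset-cylinder}). Your route to that decomposition via character sums differs mildly from the paper's, which works with reduced-degree polynomial representatives of functions $\F_p^3\to\F_p$, shows that the span of plane indicators is exactly the polynomials of degree at most $p-1$, identifies its orthogonal complement as the polynomials of degree at most $2p-3$, matches that with spans of parallel-line differences, and then lifts from $\F_p$ to $\Z$ by an explicit point-difference argument; both routes are viable for this part. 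Your observation that $\alpha=1$ follows from summing total weights is correct and matches the paper's formulation.

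The genuine gap is exactly the step you flag yourself: showing that no decomposition involving line pairs of mixed directions can yield a $\{0,1\}$-valued function of weight $p^2$ other than a cylinder. This is not a technical loose end to be patched by choosing a canonical minimal-support representative or by invoking Sziklai's trichotomy --- it \emph{is} the conjecture, in an equivalent algebraic guise, and no argument of the kind you outline is known. The non-uniqueness of the integer decomposition is severe: the lattice $\Span_{\Z}\{\1_{\ell_1}-\1_{\ell_2} : \ell_1\parallel\ell_2\}$ is large, and the paper's counterexample over $\F_8$ (a $\,8$-divisible $64$-point set in $\F_8^3$ that is not a cylinder) shows that any rigidity argument must use primality of the field in an essential way, which your propagation-of-residues heuristic does not. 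So the correct assessment of your proposal is that it reproduces the paper's relaxation of the conjecture and then correctly identifies, but does not close, the open problem.
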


The strong form implies the weak one due to the following lemma, which is a special case of a more general theorem of Ball.

\begin{lemma}[\cite{Ball_cilinder}]
Let $S$ be a set of $p^2$ points in $\mathbb{F}_p^3$. If $|N_S| \ge p$, then $S$ is $p$-divisible.
\end{lemma}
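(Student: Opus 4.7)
The plan is to translate $p$-divisibility into vanishing statements for the power sums $F_k(\ell):=\sum_{P\in S}\ell(P)^k$ via a Fermat reduction, then use each non-determined direction as a linear constraint in dual space, and conclude by a polynomial degree bound. More precisely, applying the Fermat identity $\1[a=e]=1-(a-e)^{p-1}$ on $\F_p$ pointwise, for any nonzero linear form $\ell$ on $\F_p^3$ and any $e\in\F_p$ we have
\[
|S\cap\{\ell=e\}| \equiv |S|-\sum_{k=0}^{p-1}\binom{p-1}{k}(-e)^{p-1-k}F_k(\ell)\pmod p.
\]
Since $F_0(\ell)=|S|=p^2\equiv 0$, it suffices to prove $F_k(\ell)\equiv 0$ for every nonzero $\ell$ and every $1\le k\le p-1$.

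Next, I would show that each non-determined direction $d=[v]\in N_S$ contributes a hyperplane of vanishing in the dual space. Non-determinacy of $d$ means no two points of $S$ differ by a multiple of $v$, so the projection $\pi\colon\F_p^3\to\F_p^3/\langle v\rangle$ is injective on $S$; comparing cardinalities, $\pi|_S$ is a bijection onto the $p^2$-element quotient. For any $\ell$ with $\ell(v)=0$, $\ell$ descends to $\bar\ell$ on the quotient, and
\[
F_k(\ell)=\sum_{P\in S}\ell(P)^k=\sum_{Q\in\F_p^3/\langle v\rangle}\bar\ell(Q)^k\equiv 0\pmod p
\]
for $1\le k\le p-1$: if $\bar\ell=0$ the sum vanishes, and otherwise $\bar\ell$ is surjective with fibres of size $p$, making the sum equal $p\sum_{t\in\F_p}t^k\equiv 0$.

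Finally, I would conclude by a polynomial degree argument. Writing $\ell=aX+bY+cZ$, the expression $F_k(\ell)$ is a polynomial in $\F_p[a,b,c]$ of total degree $k$. The previous step says this polynomial vanishes on the hyperplane $L_v(a,b,c):=av_1+bv_2+cv_3=0$ for every $[v]\in N_S$. Since $\deg F_k=k<p$, the standard fact that a polynomial in $\F_p[a,b,c]$ of total degree less than $p$ vanishing on a hyperplane is divisible by its defining linear form yields $L_v\mid F_k$. Distinct directions in $N_S$ produce pairwise non-proportional linear forms $L_{v_i}$, so $|N_S|\ge p$ such directions force the product $\prod_{i=1}^p L_{v_i}$ of degree $p$ to divide $F_k$, forcing $F_k\equiv 0$, which in view of the reduction above completes the proof.

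The main technical point is the divisibility step of the last paragraph: one must be careful that $F_k$ really is represented by a polynomial of total degree at most $p-1$ and that the pairwise non-proportionality of the $L_{v_i}$ reflects distinctness of projective directions in $H_\infty$. Everything else is routine -- Fermat's little theorem, together with a short counting argument on the quotient $\F_p^3/\langle v\rangle$.
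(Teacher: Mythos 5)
Your proof is correct. The paper does not prove this lemma but merely cites it from Ball \cite{Ball_cilinder}, and your argument --- reducing $p$-divisibility to the vanishing of the power sums $F_k(\ell)$ via the Fermat identity, using each non-determined direction to show $F_k$ vanishes on a dual hyperplane, and then forcing $F_k\equiv 0$ because a nonzero polynomial of degree $k\le p-1$ cannot be divisible by $p$ pairwise non-proportional linear forms --- is essentially the standard power-sum argument from Ball's original paper, with all the delicate points (the degree bound needed for the divisibility step, and non-proportionality of the $L_{v_i}$ for distinct projective directions) correctly handled.
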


The strong cylinder conjecture has been computationally verified for small primes: De Beule, Demeyer, Mattheus, Sziklai \cite{BDMS}, and Kurz and Mattheus \cite{Kurz} established its validity for $p \le 7$.
%\todo[inline]{NZ: szóval ezt direkt írtam így; persze lehet másképp is, de a K-M cikk bejelenti hogy a BDMS cikk hibás, én rossz a biz. Mivel Sam közös szerző, eléggé elhiszem ezt. KG: Mi lenne, ha mindkét munkát említenénk itt? } 
In the former paper, the authors remark that due to Blokhuis, it is known that if a $p$-divisible set of size $p^2$ contains a full line, then it is a cylinder.  On the other hand, the strong conjecture does not hold in general for non-prime fields. A counterexample for $q=8$ was constructed in \cite{Kurz}, consisting of a $8$-divisible set of $64$ points in $\mathbb{F}_8^3$ that is not a cylinder, showing that the prime field restriction is essential. Moreover, even the weak conjecture remains open for $p > 7$.

Recent research has also focused on reformulating and generalizing the conjecture. Kurz and Mattheus \cite{Kurz} interpreted the problem within the framework of divisible linear codes: a linear $[n,k,d]_q$ code $C$ is called \emph{$\Delta$-divisible} if the Hamming weight of every non-zero codeword is a multiple of some integer $\Delta > 1$. The strong cylinder conjecture can then be restated as a question about the structure of certain $p$-divisible codes.

%\medskip
%\noindent

This connection highlights that the geometric structure of point sets with many non-determined directions is closely related to deep additive properties in vector spaces over finite fields.

\medskip
\noindent
\textbf{Our contribution.}
In this paper, we present a relaxation of the strong cylinder conjecture by obtaining the following structural theorems, but as an intermediate step between $ p$-divisible sets and cylinders, we introduce the notation of cylinder type multisets.

\begin{defi}[Multiset] %We call any function $f\colon \mathbb{F}_p^3\to \F_p$ a {\it multiset} (denoted shortly by $S$ (or $S_f$)) and further we use the same expression for functions from $\mathbb{F}_p^3$ to $\Z^{\ge 0}$. The function is called the weight function for the multiset $S$. The size of a multiset $S$ is defined as $|S|=\sum_{x\in \mathbb{F}_p^3} f(x)$. 
We identify subsets and multisets of $\F_p^3$ with their characteristic functions and weight functions, respectively.

A \emph{(integer-valued) multiset} $S$ on $\F_p^3$ is a function
$f_S:\F_p^3\to \Z_{\ge0}$.
Its size is defined by
\[
|S|=\sum_{x\in\F_p^3} f_S(x).
\]

More generally, we allow $\F_p$-valued weight functions
$f_S:\F_p^3\to \F_p$.
In this case, all sums and equalities are understood modulo $p$,
and $|S|:=\sum_x f_S(x)\in\F_p$ denotes the total weight of $S$.

%\red{ez azert igy nem jo, mert keveredik az Fp, es a Z, az elsoben meg nincs rendezes}
\end{defi}
\begin{defi}[$p$-divisible multiset]
We say that a multiset of points $S \subseteq \mathbb{F}_p^3$ is called \emph{$p$-divisible} if every affine hyperplane in $\mathbb{F}_p^n$ intersects $S$ in a number of points congruent to $0 \pmod p$.
\end{defi}
Note that the size of a $p$-divisible multiset is always divisible by $p$, but its cardinality is not necessarily $p^2$.

\begin{defi}[Cylinder type multisets]
We say that a multiset $S \subset \F_p^3$ is of \emph{cylinder type} if its weight function 
$f_S$ can be written as a linear combination of characteristic functions of $p$ affine (might be overlapping) lines.
\end{defi}
For a pair of lines $\ell_1$ and $\ell_2$ we write $\ell_1 \parallel \ell_2$ if $\ell_1$ and $\ell_2$ are parallel. 
%Equivalently, the function $\mathbb{1}_H$ is a so-called \emph{cylindric function}, i.e.,
%it depends linearly on one variable and is constant on the lines parallel to a fixed direction. 

%\begin{theorem}%[Main result]
%\label{thm:cilinderZ_plinkomb}
%Let $S\subseteq \mathbb{F}_p^3$ be a multiset such that $|H\cap S|\equiv 0 \pmod p$ for every affine plane $H$. Then $S$ is an $\mathbb{F}_p$-linear combination of cylindrical (multi)sets consisting of $p$ parallel lines.
%\end{theorem}
\begin{theorem}\label{thm:cilinderZ_plinkomb}
Let $S\subseteq \mathbb{F}_p^3$ be a $p$-divisible multiset (over $\F_p$). 
%such that $|H\cap S|\equiv 0 \pmod p$ for every affine plane $H$. 
Then the following hold. The set $S$ lies in the $\mathbb{F}_p$–linear span of  any of the following:
\begin{enumerate}
\item cylinders, 
\item cylinder type multisets,
\item differences of parallel lines, i.e.,
%that is, unions (possibly with multiplicities) of $p$ parallel lines. 
%Equivalently,
\[
S \in \text{Span}_{\F_p}\{\1_{\ell_1}-\1_{\ell_2} ~ \colon ~  \ell_1\parallel \ell_2\}.
\]
\end{enumerate}
\end{theorem}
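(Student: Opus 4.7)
The plan is to prove part (3) directly; once this is established, (1) and (2) follow by cheap reductions. Any parallel line difference $\mathbb{1}_{\ell_1}-\mathbb{1}_{\ell_2}$ is already of cylinder type (view it as a combination of $p$ line indicators, $p-2$ of which carry coefficient zero), and it can also be realized as $\mathbb{1}_{C_1}-\mathbb{1}_{C_2}$, where $C_1$ and $C_2$ are two cylinders that share the same $p-1$ auxiliary parallel disjoint lines and differ only in their remaining line ($\ell_1$ versus $\ell_2$). Such auxiliary lines exist in $\F_p^3$ for every $p\ge 2$, since the parallel class of $\ell_1,\ell_2$ contains $p^2$ lines. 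Therefore $\mathrm{Span}_{\F_p}\{\mathbb{1}_{\ell_1}-\mathbb{1}_{\ell_2}:\ell_1\parallel\ell_2\}$ is contained both in the $\F_p$-span of cylinders and in that of cylinder type multisets, and it is enough to prove (3).

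The strategy is duality. Let $V$ be the space of functions $\F_p^3\to\F_p$, equipped with the non-degenerate pairing $\langle f,g\rangle=\sum_x f(x)g(x)$; let $V_0\subseteq V$ be the subspace of $p$-divisible weight functions; and write $D$ for the $\F_p$-span of parallel line differences. A short case analysis on the relative position of an affine plane and a pair of parallel lines (according to whether the plane contains the common line direction and each individual line) yields $D\subseteq V_0$. For the reverse inclusion, note that $V_0=W^\perp$, where $W$ is the span of all affine hyperplane indicators $\mathbb{1}_H$. Expanding $\mathbb{1}_H=1-(\langle a,x\rangle-c)^{p-1}$ and letting $a,c$ vary, a standard polarization argument (valid because $k!\not\equiv 0\pmod p$ for $k\le p-1$) identifies $W$ with the reduced-polynomial subspace $P_{\le p-1}$ of total degree at most $p-1$. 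The problem thus reduces to the polynomial inclusion $D^\perp\subseteq P_{\le p-1}$.

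This final inclusion is the technical core of the proof. A function $g$ annihilates $D$ exactly when, for each nonzero direction $d$, the line sum $L_d g(x):=\sum_{t\in\F_p} g(x+td)$ is constant in $x$. Writing $g=\sum_a c_a x^a$ in reduced form, expanding $(x+td)^a$ multinomially, and collapsing $\sum_{t\in\F_p} t^j=-1$ for $j$ a positive multiple of $p-1$ (and $0$ otherwise), I get
\[
L_d g(x) = -\sum_{\substack{|k|>0\\(p-1)\mid |k|}} \Bigl(\sum_{a\ge k} c_a\binom{a}{k} x^{a-k}\Bigr) d^k.
\]
The constancy in $x$, viewed as a polynomial identity in $d$, forces $c_{a'+k}\binom{a'+k}{k}=0$ for every $a'\neq 0$ and every admissible $k$. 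Since each coordinate of $a'+k$ is at most $p-1$, Lucas' theorem gives $\binom{a'+k}{k}\not\equiv 0\pmod p$, so $c_{a'+k}=0$. The step I expect to be the main obstacle is propagating these vanishings cleanly to a degree bound: every multi-index $b$ with $|b|\ge p$ admits a decomposition $b=a'+k$ with $|k|=p-1$, $k\le b$ coordinate-wise, and $|a'|\ge 1$, so each such $c_b$ vanishes. This means $\deg g\le p-1$, hence $g\in P_{\le p-1}=W$, and by orthogonality $V_0\subseteq D$, which completes the proof.
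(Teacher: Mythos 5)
Your proof is correct, but it takes a genuinely different route through the hard direction. You and the paper agree on the first half: identifying the span $\mathcal S_1$ of affine plane indicators with the reduced polynomials of total degree at most $p-1$ (the paper's Proposition~\ref{prop:gene_lines}), and on the cheap reductions between cylinders, cylinder type multisets and parallel line differences (Proposition~\ref{prop:egyenes kulonbseg}). The divergence is in how one shows $\mathcal S_1^{\perp}\subseteq D$, where $D$ is the span of parallel line differences. The paper first computes $\mathcal S_1^{\perp}$ explicitly as the polynomials of degree at most $2p-3$ (Proposition~\ref{prop:S_1meroleges}) and then exhibits every such monomial as a combination of parallel line differences by an explicit generating-sum construction, multiplying a weighted sum of differences of parallel planes by a weighted sum of planes (Proposition~\ref{prop:vege}, equation~\eqref{eq:sikkulonbsegszorsik}). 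You instead compute the annihilator $D^{\perp}$ directly: orthogonality to $D$ means the line-sum operator $L_d g$ is constant for every direction $d$, and your multinomial expansion together with the power-sum identity and the nonvanishing of $\binom{a'+k}{k}$ for reduced indices forces $\deg g\le p-1$, i.e.\ $D^{\perp}\subseteq \mathcal S_1$; biduality of the non-degenerate pairing then gives $\mathcal S_1^{\perp}\subseteq D$. Your route never needs the intermediate degree-$2p-3$ characterization and replaces the paper's explicit construction (whose verification is only sketched there) with a clean annihilator computation; the paper's route, in exchange, produces the concrete polynomial description $\mathcal S_1^\perp=\{\deg\le 2p-3\}$, which it reuses elsewhere. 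Two small points you should make explicit: the coefficient polynomial of $x^{a'}$ vanishes at $d=0$ as well (since every admissible $k$ has $|k|>0$), so it vanishes on all of $\F_p^3$ and its reduced coefficients are zero; and the auxiliary $p-1$ lines in your cylinder-difference construction must be chosen disjoint from $\ell_1$ and $\ell_2$, which is possible since a parallel class contains $p^2\ge p+1$ lines.
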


Now we consider $\Z$-linear combinations of cylinders to strengthen this result as follows.
\begin{theorem}\label{thm:cilinderZlinkomb}
Let $S\subseteq \mathbb{F}_p^3$ be a $p$-divisible set of size $p^2$. 
% with $|H\cap S|\equiv 0 \pmod p$ for every affine plane $H$.
Then $S$ is a $\mathbb{Z}$-linear combination of cylinders. Alternatively, $S$ is the sum of the characteristic function of a plane and an element of  $\text{Span}_{\Z}\{\1_{\ell_1}-\1_{\ell_2}\mid \ell_1\parallel \ell_2\}$.
\end{theorem}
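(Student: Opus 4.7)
Set $L := \mathrm{Span}_\mathbb{Z}\{\1_{\ell_1}-\1_{\ell_2} \mid \ell_1 \parallel \ell_2\}$. I first note that the two conclusions of the theorem are equivalent to the single claim $\1_S - \1_{\pi_0} \in L$ for any fixed plane $\pi_0$: the second form is then immediate, while the first follows because a plane is already a cylinder and each generator $\1_{\ell_1}-\1_{\ell_2}$ of $L$ rewrites as $\1_{C_1} - \1_{C_2}$ for two parallel cylinders $C_1, C_2$ obtained by padding $\ell_1, \ell_2$ with a common set of $p-1$ extra parallel lines from the same parallel class.

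The plan is to reduce the theorem to a pair-difference lemma using Theorem \ref{thm:cilinderZ_plinkomb}(3). The difference $\1_S - \1_{\pi_0}$ is an integer-valued $p$-divisible function (both $\1_S$ and $\1_{\pi_0}$ are). Applying Theorem \ref{thm:cilinderZ_plinkomb}(3) to its reduction modulo $p$ and lifting the $\mathbb{F}_p$-coefficients to integers yields some $w \in L$ and an integer function $g\colon \mathbb{F}_p^3 \to \mathbb{Z}$ with
\[
\1_S - \1_{\pi_0} = w + p g.
\]
Comparing total masses ($\sum \1_S = \sum \1_{\pi_0} = p^2$ and $\sum w = 0$) forces $\sum g = 0$. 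It therefore suffices to establish the following key lemma: \emph{for every integer function $g\colon \mathbb{F}_p^3 \to \mathbb{Z}$ with $\sum g = 0$, the scaled function $p g$ lies in $L$.}

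To prove the lemma I would decompose $g$ as a $\mathbb{Z}$-linear combination of atomic sum-zero functions $\delta_a - \delta_b$; by linearity it is enough to show $p(\delta_a - \delta_b) \in L$ for every pair $a \ne b$. Fix any plane $\pi$ through $a$ and $b$, set $d_0 := \langle b - a \rangle$, and sum the parallel line differences $\1_{a+\langle d\rangle}-\1_{b+\langle d\rangle}$ over the $p$ directions $d$ of $\pi$ distinct from $d_0$. A short pointwise check shows this sum equals exactly $p(\delta_a - \delta_b)$: mass concentrates as $+p$ at $a$ and $-p$ at $b$ (one contribution per allowed direction), every other point of $\pi$ receives a cancelled $+1$/$-1$ pair (when off the line $\overline{ab}$) or no contribution at all (when on $\overline{ab}$, where the directions from $a$ and from $b$ both coincide with the excluded $d_0$), and points outside $\pi$ receive nothing. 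Since each summand already lies in $L$, the lemma follows. This planar ``dipole'' identity — geometrically replacing a $\pm p$ pair of point masses by a weighted sum of parallel line differences inside a connecting plane — is the only nontrivial step in the argument; the main obstacle is simply verifying this telescoping count cleanly and confirming that one may always choose the ambient plane $\pi$, which is straightforward.
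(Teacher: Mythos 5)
Your argument is correct, and both of your auxiliary ingredients appear verbatim in the paper: your ``planar dipole identity'' is exactly the paper's Point Difference Lemma (Lemma~\ref{lemma:point-diff}), proved by the same pairing of the $p$ non-$\overline{ab}$ directions inside a plane through $a$ and $b$, and your key lemma ($\sum g=0$ implies $pg$ lies in the span of parallel-line differences) is exactly Lemma~\ref{lemma:p-zero-sum}. Where you diverge from the paper's own proof of Theorem~\ref{thm:cilinderZlinkomb} is in how the error term after lifting is handled: the paper keeps the lifted function $g$ with $\sum g=p^2$ and $g\equiv\1_S\pmod p$, and iteratively adds correction terms $e_{c,d}=p\1_c-p\1_d$ to push $g$ to be $\{0,1\}$-valued, thereby constructively recovering $\1_S$ itself; you instead write $\1_S-\1_{\pi_0}=w+pg$ with $\sum g=0$ and absorb $pg$ in one stroke via the zero-sum lemma. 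Your route is shorter and is precisely the strategy the paper itself adopts for the more general multiset version (Theorem~\ref{thm:multiset-cylinder}), of which Theorem~\ref{thm:cilinderZlinkomb} is a special case; what the paper's iterative Step~2 buys in exchange is a constructive procedure terminating at the genuine characteristic function. The only point worth spelling out in your write-up is that the reduction of $\1_S-\1_{\pi_0}$ modulo $p$ is an $\F_p$-valued weight function (taking value $p-1$ on $\pi_0\setminus S$) to which Theorem~\ref{thm:cilinderZ_plinkomb}(3) applies, and that the $p-1$ padding lines used to turn a difference of parallel lines into a difference of cylinders can be chosen disjoint from $\ell_1$ and $\ell_2$; both are immediate.
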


One may try to extend Theorem~\ref{thm:cilinderZlinkomb} to
$p$-divisible multisets without any restriction on their size.
However, this is not possible in general.

Indeed, a single point of $\F_p^3$ with weight $p$ is a $p$-divisible
multiset, but its total weight equals $p$ and hence is not divisible
by $p^2$.
On the other hand, every cylinder has a total weight divisible by $p^2$,
and therefore any $\Z$-linear combination of cylinders also has this
property.
Consequently, the conclusion of Theorem~\ref{thm:cilinderZlinkomb}
    fails without the size assumption. However we prove the following. 
    \begin{theorem}[Multiset version of the cylinder theorem]
\label{thm:multiset-cylinder}\label{thm:multisetcylinder}
Let $w:\F_p^3\to\Z_{\ge 0}$ be a $p$--divisible multiset, that is, for every affine plane
$H\subseteq\F_p^3$,
\[
\sum_{x\in H} w(x)\equiv 0 \pmod p.
\]
Assume moreover that
\[
\sum_{x\in\F_p^3} w(x)=p^2.
\]
Then for any affine plane $H\subseteq\F_p^3$ it holds that
\[
w-\1_H \in \Span_{\Z}\{\1_{\ell_1}-\1_{\ell_2}\mid \ell_1\parallel \ell_2\}.
\]
Equivalently, $w$ is a $\Z$--linear combination of cylinders.
\end{theorem}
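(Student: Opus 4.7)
The plan is to set $g := w - \1_H$ and show $g \in L_\Z := \Span_\Z\{\1_{\ell_1} - \1_{\ell_2} \mid \ell_1 \parallel \ell_2\}$. Observe that $\sum_x g(x) = p^2 - p^2 = 0$, and $g$ is $p$-divisible over $\Z$ because $w$ is $p$-divisible and every affine plane meets $H$ in either $0$, $p$, or $p^2$ points. More generally, I aim to prove that \emph{every} integer-valued $p$-divisible function on $\F_p^3$ with total sum zero belongs to $L_\Z$; this yields the displayed form, and the equivalent cylinder statement follows from it by a purely formal argument.

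The first step lifts from $\F_p$ to $\Z$. Reducing $g$ modulo $p$ gives an $\F_p$-valued $p$-divisible function, so Theorem~\ref{thm:cilinderZ_plinkomb}(3) produces some $\tilde g \in L_\Z$ with $g \equiv \tilde g \pmod p$. Writing $g = \tilde g + p r$ with $r : \F_p^3 \to \Z$, and using $\sum_x \tilde g(x) = 0$, we deduce $\sum_x r(x) = 0$. Expanding
\[
r = \sum_j n_j\bigl(\1_{\{x_j\}} - \1_{\{y_j\}}\bigr)
\]
as a $\Z$-linear combination of differences of point masses, the entire argument reduces to the following key lemma: for every pair of points $x, y \in \F_p^3$,
\[
p\bigl(\1_{\{x\}} - \1_{\{y\}}\bigr) \in L_\Z.
\]
Once this is established, $pr \in L_\Z$ and hence $g = \tilde g + pr \in L_\Z$.

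To prove the key lemma, choose any affine plane $P$ containing both $x$ and $y$, and enumerate the $p+1$ directions $v$ of $P$. Let $\ell^x_v$ and $\ell^y_v$ denote the lines of $P$ through $x$ and $y$ in direction $v$; they are parallel (coinciding when $v$ is the direction of the line $\overline{xy}$), so each $\1_{\ell^x_v} - \1_{\ell^y_v}$ lies in $L_\Z$. The pencil identity
\[
\sum_v \1_{\ell^x_v} = \1_P + p\,\1_{\{x\}}
\]
holds because every point of $P\setminus\{x\}$ lies on exactly one line of the pencil through $x$, while $x$ lies on all $p+1$; subtracting the analogous identity for $y$ gives $\sum_v(\1_{\ell^x_v} - \1_{\ell^y_v}) = p(\1_{\{x\}} - \1_{\{y\}})$, which is therefore in $L_\Z$. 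For the equivalent cylinder formulation, $\1_H$ is itself a cylinder, and each generator $\1_{\ell_1} - \1_{\ell_2}$ equals $\1_{C_1} - \1_{C_2}$ for two cylinders obtained by adjoining the same $p-1$ further lines from the parallel class of $\ell_1$, chosen disjoint from both $\ell_1$ and $\ell_2$ (feasible as the class has $p^2$ members). The only real content beyond Theorem~\ref{thm:cilinderZ_plinkomb} is the key lemma, and I do not foresee serious obstacles given that the pencil identity handles it transparently.
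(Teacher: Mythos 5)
Your proof is correct and follows essentially the same route as the paper: reduce modulo $p$, invoke the $\F_p$-span description of $\mathcal S_1^{\perp}$ by differences of parallel lines, lift the coefficients to $\Z$, and absorb the remaining $p$-multiple discrepancy using the pencil identity $p(\1_{\{x\}}-\1_{\{y\}})\in\Span_{\Z}\{\1_{\ell_1}-\1_{\ell_2}\mid\ell_1\parallel\ell_2\}$, which is exactly the paper's point difference lemma. The only (harmless) difference is bookkeeping: you subtract $\1_H$ at the outset so the residue already has sum zero, whereas the paper subtracts a line $\ell_0$ from the quotient $t$ and recombines via $\1_H-p\,\1_{\ell_0}$.
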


Our paper is organized as follows. 
Section \ref{sec:2} introduces basic notation and recalls well-known observations used throughout the paper. Sections \ref{sec:3} and \ref{cylinders} establish results over $\mathbb{F}_p$, interpreting $p$-divisible sets via characteristic polynomials and completing the proof of Theorem~11. Section \ref{sec:5} extends these results from $\mathbb{F}_p$ to $\mathbb{Z}$. Section \ref{sec:6} reveals an interesting connection with vanishing sums of roots of unity.  Section \ref{sec:7} explains how the cylindrical set conjecture is related to R\'edei's conjecture.

\section{Notation and technique}\label{sec:2}

Throughout the paper, $p$ denotes a prime number and $\F_p^n$
the $n$-dimensional affine space over the finite field $\F_p$.
Affine one- and two-dimensional subspaces of $\F_p^n$ will be called
lines and planes, respectively.

As introduced above, we identify sets and multisets in $\F_p^n$
with their characteristic functions and weight functions.
In particular, a (usual) set corresponds to a $\{0,1\}$-valued function,
which may be regarded either as a function into $\Z_{\ge 0}$ or into $\F_p$,
depending on the context.

The key observation underlying our proofs is that the condition that
a multiset $S$ intersects every affine plane $H$ in a multiple of $p$
can be reformulated as an orthogonality condition modulo $p$
between the corresponding weight functions.
This allows us to treat functions from $\F_p^n$ to $\F_p$
as $n$-variable polynomials over $\F_p$
and to apply elementary algebraic tools.

We begin with the following well-known identity.

%We treat functions from $\F_p^n$ to $\F_p$ as $n$-variable polynomials. 
%Let us start with the following trivial and well-known observation. 

\begin{prop}\label{powersum} Suppose that $s$ is a nonnegative integer. Then
  $$\sum_{a\in \F_p}a^s=
  \begin{cases}
      -1 & \text{if } (p-1) \mid s \mbox{ and } s>0 \\
    \   0 & \text{else. } 
  \end{cases}$$
\end{prop}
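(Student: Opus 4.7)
The plan is to split into three cases according to $s$ and use the cyclic structure of $\F_p^*$.

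First I would dispose of the small case $s=0$: then $a^0=1$ for every $a\in\F_p$ (taking $0^0=1$ by convention), so the sum equals $p\equiv 0 \pmod p$, matching the "else" branch since the condition of the first branch requires $s>0$.

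Next, for $s>0$ with $(p-1)\mid s$, I would invoke Fermat's little theorem: every nonzero $a\in\F_p$ satisfies $a^{p-1}=1$, and hence $a^s=(a^{p-1})^{s/(p-1)}=1$. Since $0^s=0$ in this range, the sum is $\sum_{a\in\F_p^*} 1 = p-1 \equiv -1 \pmod p$, as claimed.

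The remaining and most interesting case is $s>0$ with $(p-1)\nmid s$. Here I would pick a primitive root $g$ of $\F_p^*$, so that $\F_p^*=\{g^0,g^1,\dots,g^{p-2}\}$. Then
\[
\sum_{a\in\F_p} a^s \;=\; 0^s + \sum_{i=0}^{p-2} g^{is} \;=\; \sum_{i=0}^{p-2}(g^s)^i.
\]
Because $(p-1)\nmid s$, the element $g^s\in\F_p^*$ is not equal to $1$, so I may sum the geometric series to obtain $(g^{(p-1)s}-1)/(g^s-1)$. The numerator vanishes by Fermat's little theorem, while the denominator is nonzero, so the sum is $0$ in $\F_p$.

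There is no real obstacle here; the only subtlety is ensuring one has a primitive root available and that the denominator $g^s-1$ is invertible precisely when $(p-1)\nmid s$. The three cases together give exactly the stated two-branch formula.
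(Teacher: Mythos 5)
Your proof is correct and complete. The paper states this proposition as a well-known identity and gives no proof at all, so there is nothing to compare against; your three-way case split ($s=0$; $s>0$ with $(p-1)\mid s$ via Fermat's little theorem; $s>0$ with $(p-1)\nmid s$ via a primitive root and the geometric series) is the standard argument and handles all cases, including the $0^0=1$ convention, correctly.
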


Observe that Proposition \ref{powersum} implies the following more general result. 
\begin{prop}\label{lem:integral}
Let $m(x_1,x_2, \ldots x_n)=x_1^{k_1}x_2^{k_2} \ldots x_n^{k_n}$ be a monomial in $ \F_p[x_1,x_2, \ldots, x_n]$. Then 
$$\sum_{(a_1,a_2, \ldots, a_n)  \in \F_p^k} m(a_1,a_2, \ldots , a_n)=  \sum_{a_i \in \F_p} \prod_{i=1}^n a_i^{k_i}=  \begin{cases}
    \  (-1)^n & \text{if }  p-1 \mid k_i>0 \mbox{  for every } 1 \le i \le n ,  \\
     \  0 & \text{else. } 
  \end{cases}$$ 
%$$\sum_{a_1,a_2, \ldots, a_n \in \F_p} m(a_1,a_2, \ldots, a_n) \ne 0 $$ if and only if $p-1 \mid k_i$ for $i=1,2,\ldots, n$.
\end{prop}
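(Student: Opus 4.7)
The plan is a direct reduction to Proposition \ref{powersum} via Fubini/distributivity. The key observation is that the monomial $m(x_1,\ldots,x_n) = x_1^{k_1}\cdots x_n^{k_n}$ factors as a product of one-variable monomials, so the sum over $\F_p^n$ factors as a product of one-variable sums over $\F_p$.

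First, I would rewrite
\[
\sum_{(a_1,\ldots,a_n)\in \F_p^n} a_1^{k_1}\cdots a_n^{k_n}
= \prod_{i=1}^{n}\Bigl(\sum_{a_i\in\F_p} a_i^{k_i}\Bigr),
\]
which is immediate by expanding the product on the right and using that the domain of summation is the Cartesian product $\F_p \times \cdots \times \F_p$. This step is purely formal and requires no hypothesis on $p$ or the $k_i$.

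Next, I would apply Proposition \ref{powersum} to each individual factor. That proposition states that $\sum_{a\in\F_p} a^{k_i}$ equals $-1$ if $k_i > 0$ and $(p-1)\mid k_i$, and equals $0$ otherwise. Hence the product $\prod_{i=1}^n \sum_{a_i\in\F_p} a_i^{k_i}$ vanishes as soon as at least one index $i$ fails the condition $k_i > 0$ and $(p-1)\mid k_i$. If every index satisfies this condition, then every factor equals $-1$ and the product is $(-1)^n$. This matches the two cases in the statement.

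There is essentially no obstacle: the proposition is a straightforward corollary of Proposition \ref{powersum} via separation of variables, and no subtlety arises beyond verifying the trivial case distinction. The only very minor care needed is the convention about $k_i = 0$: one has $\sum_{a\in\F_p} a^0 = p \equiv 0 \pmod p$ in $\F_p$, which is already consistent with Proposition \ref{powersum} (since there $s=0$ falls in the ``else'' branch because of the requirement $s>0$), so the formula remains valid uniformly.
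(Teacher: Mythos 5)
Your proof is correct and matches the paper's approach: the paper states this proposition as an immediate consequence of Proposition~\ref{powersum}, and the intended argument is exactly your separation of variables, factoring the sum over $\F_p^n$ into a product of one-variable power sums and applying Proposition~\ref{powersum} to each factor. Your remark on the $k_i=0$ convention is a correct and welcome clarification.
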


%\begin{prop}\label{prop:monomialsum}
%Let $m(a_1,a_2, \ldots , a_k):=a_1^{\alpha_1} a_2^{\alpha_2} \ldots a_k^{\alpha_k}$ be a monomial. Then  
%$$\sum_{(a_1,a_2, \ldots, a_k)  \in \F_p^k} m(a_1,a_2, \ldots , a_k)=  \sum_{a_i \in \F_p} \prod_{i=1}^k a_i^{\alpha_i}=  \begin{cases}
%    \  (-1)^k & \text{if }  \alpha_i = p-1 \mbox{  for every } 1 \le i \le k ,  \\
%     \  0 & \text{else. } 
%  \end{cases}$$ 
%\end{prop}

\begin{defi}[Orthogonal functions]
%We define a bilinear form on the $\F_p$ vector space of functions from $\F_p^n \to \F_p$.
Let $f,g\colon \F_p^n \to \F_p$. We define a bilinear form
\[
    \langle f,g\rangle := \sum_{x\in \F_p^n} f(x)g(x).
\]
We say that $f$ and $g$ are \emph{orthogonal}, and write $f\perp g$, if and only if $\langle f,g\rangle = 0$.
\end{defi}

In particular, for functions $f,g\colon \F_p^3 \to \F_p$, the orthogonality is understood in the same sense:
\[
    f\perp g \quad \Longleftrightarrow \quad \sum_{x\in \F_p^3} f(x)g(x)=0.
\]
For a family of affine planes $\mathcal{H}$ in $\F_p^3$, we may write
\[
    f \in \operatorname{Span}\{\, \mathbb{1}_H \mid H \text{ affine plane}\}^{\perp}
    \quad \text{if and only if} \quad
    f\perp \mathbb{1}_H \text{ for all } H\in \mathcal{H}.
\]

\section{Span of planes}\label{sec:3}

In this section we work with $\F_p$--valued weight functions
$f\colon \F_p^3\to\F_p$,
which arise naturally when the $p$--divisibility condition is interpreted
modulo~$p$.
(In particular, characteristic functions of sets are special cases.)

Let $\mathcal H$ denote the family of affine planes in $\F_p^3$, and for
$H\in\mathcal H$ let $\1_H\colon\F_p^3\to\{0,1\}\subseteq\F_p$ be its
characteristic function.
We consider the $\F_p$--linear subspace
\[
\mathcal S_1 := \Span_{\F_p}\{\1_H : H\in\mathcal H\}
\subseteq \{\, f\colon\F_p^3\to\F_p \,\}.
\]
A function $f$ satisfies $f\perp \1_H$ for every affine plane $H$
if and only if $f\in \mathcal S_1^{\perp}$.

The description of $\mathcal S_1$ obtained below will allow us, in the next
section, to determine its orthogonal complement $\mathcal S_1^{\perp}$
explicitly.
This characterization is the key algebraic ingredient in the proof of
Theorem~\ref{thm:cilinderZ_plinkomb}.

We identify the functions $\F_p^3\to\F_p$ with the polynomials
$\F_p[x,y,z]/_(x^p-x,y^p-y,z^p-z)$, or, looking at it another way, with polynomials $\F_p[x,y,z]$ whose degree in  each variable is at most $p-1$.

\begin{prop}\label{prop:gene_lines}
Let us identify 3-variable polynomials over $\F_p$ by their polynomial function from $\F_p^3$ to $\F_p$. Then 
\[
   \mathcal{S}_1 = \{\, m(x,y,z) \in \F_p[x,y,z] : \deg(m)\le p-1 \,\}.
\]
\end{prop}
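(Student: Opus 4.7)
I would prove both inclusions separately. For the easy direction $\mathcal S_1 \subseteq \{m \in \F_p[x,y,z] : \deg m \le p-1\}$, every affine plane is the zero set $\{L=0\}$ of some non-zero affine linear form $L$, and Fermat's little theorem yields the polynomial representation $\1_H = 1 - L^{p-1}$, which has total degree $p-1$. The bound on total degree is preserved under $\F_p$-linear combinations, so the whole subspace $\mathcal S_1$ lies in $\{\deg m \le p-1\}$.

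For the reverse inclusion I would aim to produce each monomial $x^i y^j z^k$ with $i+j+k \le p-1$ inside $\mathcal S_1$, in two stages. First, $1 \in \mathcal S_1$: for any non-zero linear form $L_0$, the planes $\{L_0=d\}$ with $d \in \F_p$ partition $\F_p^3$, giving $\sum_d \1_{L_0=d} = 1$. Next, fix a non-zero homogeneous linear form $L_0 = \alpha x + \beta y + \gamma z$. The relations $(L_0-d)^{p-1} = 1 - \1_{L_0=d}$ place the $p$ polynomials $\{(L_0-d)^{p-1} : d \in \F_p\}$ in $\mathcal S_1$, and a short evaluation argument (substitute $L_0 = e$ for each $e \in \F_p$) shows they are linearly independent. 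Since they all live in the $p$-dimensional space of polynomials in $L_0$ of degree at most $p-1$, their span fills that entire space; hence $L_0^k \in \mathcal S_1$ for every $0 \le k \le p-1$ and every non-zero linear form $L_0$.

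The second stage applies the classical polarization identity
\[
d! \cdot y_1 y_2 \cdots y_d = \sum_{\emptyset \ne S \subseteq [d]} (-1)^{d-|S|} \Bigl(\sum_{i \in S} y_i\Bigr)^d,
\]
which is valid over $\F_p$ as long as $d \le p-1$ so that $d!$ is invertible. Specialising the $y_i$ to copies of $x$, $y$, $z$ with multiplicities $i$, $j$, $k$ summing to $d$ turns the left-hand side into $d! \cdot x^i y^j z^k$ and the right-hand side into an $\F_p$-linear combination of $d$-th powers of linear forms in $x, y, z$ with integer coefficients. By the previous step each such $d$-th power lies in $\mathcal S_1$, and ranging over $0 \le d \le p-1$ produces every monomial of total degree at most $p-1$.

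The main subtlety of the argument is precisely the degree constraint imposed by the invertibility of $d!$ in characteristic $p$: the polarization step collapses once $d \ge p$, which matches the threshold in the statement. The remaining ingredients---Fermat's little theorem, the partition into parallel planes, and the linear independence of $\{(L_0-d)^{p-1}\}_{d \in \F_p}$---are routine linear algebra.
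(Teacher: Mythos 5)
Your proposal is correct, and it takes a genuinely different route from the paper for the hard inclusion. The paper produces each monomial $x^iy^jz^k$ of total degree at most $p-1$ directly, by forming the weighted sum $\sum_{a,b,c}\bigl(1-(ax+by+cz+d)^{p-1}\bigr)a^sb^tc^u$ over all planes, expanding by the multinomial theorem, and invoking the power-sum identity $\sum_{a\in\F_p}a^r=0$ unless $(p-1)\mid r$, $r>0$, to kill all but one term; this is a single uniform ``dual basis'' computation, at the cost of some bookkeeping with multinomial coefficients and the exceptional case $s=t=u=p-1$. You instead factor the problem into three clean modules: the partition of $\F_p^3$ into parallel planes gives $1\in\mathcal S_1$; the Lagrange-interpolation-style observation that the evaluation matrix of $\{(L_0-d)^{p-1}\}_{d\in\F_p}$ is $J-I$, invertible over $\F_p$, gives all powers $L_0^k$ ($k\le p-1$) of every non-zero linear form; and polarization converts $d$-th powers of linear forms into arbitrary monomials of degree $d\le p-1$ (the forms $\sum_{i\in S}y_i$ that appear have coefficients in $\{0,\dots,p-1\}$, not all zero, so they are genuinely non-zero linear forms mod $p$, and $d!$ is invertible). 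Your version makes the degree threshold $p-1$ appear transparently as the invertibility bound for $d!$, and is perhaps easier to adapt to $\F_p^n$; the paper's version avoids the polarization identity and gets each monomial in one shot. Both are complete proofs of the proposition.
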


\begin{proof}
First observe that for every $a,b,c,d\in\F_p$, the function
\[
   \1_H := 1-(ax+by+cz+d)^{p-1}
\]
is the characteristic function of the affine plane $H$, whose points are defined by the equation $ax+by+cz+d=0$.
%\todo{Ez be van valahol vezetve? Miert H, en elsore V-re gondolnek (variety), de a jeloles mindegy. SG }. 
Indeed,  $t^{p-1}=1$ for $t\ne 0$ holds in $\F_p$, hence $\1_H(x,y,z)=1$ exactly on the plane $H$ and $0$ elsewhere. Therefore,
\[
   \mathcal{S}_1 = \mathrm{Span}_{\F_p}\{\, 1-(ax+by+cz+d)^{p-1} : a,b,c,d\in\F_p \,\},
\]
and every element of $\mathcal{S}_1$ is a polynomial of degree at most $p-1$.

To show the converse inclusion, it suffices to prove that every monomial
$x^{\alpha}y^{\beta}z^{\gamma}$ with $\alpha+\beta+\gamma\le p-1$
can be obtained as a linear combination of the characteristic functions of affine planes.

Fix integers $0\le s,t,u \le p-1$ and consider the weighted sum of characteristic functions of planes. The weights are defined by a polynomial function arising from a monomial %$v(a,b,c)=
$a^sb^tc^u$. Then we get
%\todo[inline]{KG:Ez csak a $d=0$-t kezeli és kell még valami a $s=t=u=0$-ról is.\\ NZ: beírtam most $d$-t.}
\begin{equation}\label{eq:sum_planes}
\begin{split}
Q_{s,t,u}(x,y,z)
   &= \sum_{a,b,c\in\F_p} \!\bigl(1-(ax+by+cz+d)^{p-1}\bigr)a^sb^tc^u\\
   &= \sum_{a,b,c\in\F_p}a^sb^tc^u
     -\sum_{a,b,c\in\F_p}\sum_{\substack{i,j,k\ge0\\ i+j+k\le p-1}}
       \binom{p-1}{i,j,k}a^{i+s}b^{j+t}c^{k+u}d^{(p-1-i-j-k)}x^iy^jz^k,
\end{split}
\end{equation}
where $\binom{p-1}{i,j,k}=\dfrac{(p-1)!}{i!\,j!\,k!\,(p-1-i-j-k)!}$ denotes the multinomial coefficient, which is not divisible by $p$, and $d\in \F_p$ is a fixed element.

By Proposition~\ref{lem:integral} (the standard identity showing $\sum_{x\in\F_p}x^r=0$  holds unless $p-1\mid~ r$), the first term on the second line of \eqref{eq:sum_planes} vanishes except when $p-1$ divides $s,t$ and $u$. Moreover, this term is constant (on $\F_p^3$), which is clearly a linear combination of characteristic functions of planes. 
Interchanging the order of summation in the second term, we obtain
\[
% Q_{s,t,u}(x,y,z)
  % = 
  % - 
   \!\!\sum_{\substack{i,j,k\ge0\\ i+j+k\le p-1}}\!\!
       \binom{p-1}{i,j,k}x^iy^jz^kd^{(p-1-i-j-k)}
       \!\!\sum_{a,b,c\in\F_p}\! a^{i+s}b^{j+t}c^{k+u}.
    \]
Applying Proposition~\ref{lem:integral} again, we get that  the inner sum is nonzero precisely when
\[
   p-1 \mid i+s,\quad p-1 \mid j+t,\quad p-1 \mid k+u.
\]
For given $(s,t,u)$ with $0\le s,t,u\le p-1$, there is exactly one triple $(i,j,k)$ with $i+j+k\le p-1$, 
%\todo{En most ezt hirtelen nem latom, miert is fontos ez a feltetel. nyilvan az. SG}
 %there is exactly one triple $(s,t,u)$ with $0\le s,t,u<p-1$ 
satisfying
\[
   i+s=j+t=k+u=p-1, 
\]
except if $s=t=u=p-1$. In this exceptional case, $x^iy^jz^k$ is constant, which again is the linear combination of characteristic functions of planes.

%\todo{Szerintem itt van nemi tennivalo, mert egyreszt nem p-1 osszeget akarunk mindenhol, hanem bizonyos helyekre 0-at. Viszont ha mind 0, akkor van a konstans, ahol az osszeg talan inkabb 0, SG}
Choosing these exponents $s=p-1-i,t=p-1-j,u=p-1-k$, all inner sums vanish except one, and $Q_{s,t,u}$ reduces (up to adding and multiplying by nonzero scalar) to the monomial $x^iy^jz^k$.

Thus, for every monomial $x^{i}y^{j}z^{k}$ of total degree at most $p-1$, there exists a linear combination of characteristic functions of affine planes that equals it. Therefore, all polynomials of degree $\le p-1$ belong to $\mathcal{S}_1$.

Since we have proven the reverse inclusion $\mathcal{S}_1\subseteq\{m \in \F_p[x,y,z] :\deg(m)\le p-1\}$ also holds, the proof is complete.
\end{proof}

 \section{Cylinder type sets over \texorpdfstring{$\F_p$}{}}\label{cylinders}

As it was noticed earlier, the linear function space we would like to understand is the one that is orthogonal to $\mathcal{S}_1$. 
Before describing it precisely, let us recall the geometric meaning of these spaces.

%We say that a set $H \subset \F_p^3$ is of \emph{cylinder type} if its characteristic function $\mathbb{1}_H$ can be written as a linear combination of characteristic functions of affine lines. Equivalently, the function $\mathbb{1}_H$ is a so-called \emph{cylindric function}, i.e., it depends linearly on one variable and is constant on the lines parallel to a fixed direction.

As a consequence of the previous proposition (Proposition \ref{prop:gene_lines}), the characteristic function of a cylinder type set $H$ 
is orthogonal to $\mathcal{S}_1$, and hence to every monomial of degree at most $p-1$. 
It is also easy to verify, using Proposition~\ref{lem:integral}, that all monomials of degree at most $2p-3$ satisfy this orthogonality condition. 
Thus these monomials are contained in $\mathcal{S}_1^{\perp}$.
We now show that, in fact, $\mathcal{S}_1^\perp$ is spanned by these monomials.

\begin{prop}\label{prop:S_1meroleges}
$$
\mathcal{S}_1^{\perp}=\left\{ m(x,y,z) \in \F_p[x,y,z] \mid \deg(m) \le 2p-3 \right\}.
$$
\end{prop}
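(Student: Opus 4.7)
My plan is to establish the inclusion $\supseteq$ directly by an inner-product calculation, and then to obtain equality via a dimension count exploiting the non-degeneracy of $\langle \cdot, \cdot \rangle$ on the space of all reduced polynomials. For the inclusion, Proposition~\ref{prop:gene_lines} tells us that $\mathcal{S}_1$ is spanned by the monomials $x^a y^b z^c$ with $a+b+c \le p-1$. Hence it suffices to verify $\langle x^i y^j z^k,\, x^a y^b z^c\rangle = 0$ whenever $0 \le i,j,k \le p-1$ satisfy $i+j+k \le 2p-3$ and $0 \le a,b,c \le p-1$ satisfy $a+b+c \le p-1$. By Proposition~\ref{lem:integral}, this inner product is nonzero precisely when each of $i+a$, $j+b$, $k+c$ is a positive multiple of $p-1$; since each such sum lies in $[0,2p-2]$, the only possible values are $p-1$ and $2p-2$, and $u+v=2p-2$ with $u,v \le p-1$ forces $u=v=p-1$. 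A short case analysis on how many of the three sums equal $2p-2$ then shows this is incompatible with the degree constraints: the ``all $p-1$'' case gives $a+b+c = 3(p-1)-(i+j+k) \ge p > p-1$, while any occurrence of $2p-2$, combined with $a+b+c \le p-1$, forces $i=j=k=p-1$, giving $i+j+k = 3p-3 > 2p-3$.

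For the reverse inclusion, I would argue by dimension. The bilinear form $\langle \cdot, \cdot \rangle$ is non-degenerate on the $p^3$-dimensional space of all reduced polynomials, since every point indicator $\1_{\{x_0\}}$ is itself a reduced polynomial; hence $\dim \mathcal{S}_1^{\perp} = p^3 - \dim \mathcal{S}_1 = p^3 - \binom{p+2}{3}$. On the other hand, the involution $(i,j,k) \mapsto (p-1-i,\, p-1-j,\, p-1-k)$ on reduced monomials sends total degree $d$ to $3(p-1)-d$, giving a bijection between reduced monomials of total degree $\le 2p-3$ and those of total degree $\ge p$. The latter are precisely the reduced monomials lying outside a basis of $\mathcal{S}_1$, so the space of reduced polynomials of degree $\le 2p-3$ also has dimension $p^3 - \binom{p+2}{3}$. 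Combined with the inclusion above, this forces equality.

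The only technical step is the case analysis in the first paragraph; the bound $2p-3$ is sharp, as one sees directly from $\langle x^{p-1} y^{p-1},\, z^{p-1}\rangle = (-1)^3 = -1 \ne 0$, which exhibits a reduced monomial of total degree $2p-2$ that does not belong to $\mathcal{S}_1^{\perp}$, confirming that no larger degree bound is possible.
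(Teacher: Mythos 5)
Your proof is correct, but it handles the harder inclusion $\mathcal{S}_1^{\perp}\subseteq\{\deg\le 2p-3\}$ by a genuinely different route. The paper argues directly: given $m\in\mathcal{S}_1^{\perp}$ containing a reduced monomial $x^iy^jz^k$ of degree at least $2p-2$ and of maximal total degree, it pairs $m$ against the test monomial $x^{p-1-i}y^{p-1-j}z^{p-1-k}$ (which has degree at most $p-1$, hence lies in $\mathcal{S}_1$) and checks, using Proposition~\ref{lem:integral} together with the maximality assumption, that this test monomial is orthogonal to every other monomial of $m$ but not to $x^iy^jz^k$ --- a contradiction. You instead deduce this inclusion from a dimension count: non-degeneracy of $\langle\cdot,\cdot\rangle$ (witnessed by point indicators, which are themselves reduced polynomials) gives $\dim\mathcal{S}_1^{\perp}=p^3-\binom{p+2}{3}$, and the involution $(i,j,k)\mapsto(p-1-i,\,p-1-j,\,p-1-k)$ shows that the span of reduced monomials of degree at most $2p-3$ has exactly this dimension, so the inclusion you prove directly must be an equality. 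Both arguments are valid; yours trades the slightly delicate ``orthogonal to every other monomial by maximality'' verification for the standard facts that the form is non-degenerate and that $\dim W^{\perp}=\dim V-\dim W$, and it has the side benefit of recording $\dim\mathcal{S}_1^{\perp}$ explicitly. For the easy inclusion (degree at most $2p-3$ implies orthogonality to $\mathcal{S}_1$), which the paper dismisses as clear, your case analysis via Proposition~\ref{lem:integral} supplies the missing details correctly, and the sharpness example $\langle x^{p-1}y^{p-1},z^{p-1}\rangle=-1$ is a nice confirmation that $2p-3$ cannot be improved.
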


\begin{proof}
Let us write the characteristic polynomial of an element of $\mathcal{S}_1^\perp$ 
as a polynomial $m$ which is a sum of monomials of the form $x^iy^jz^k$, where $0 \le i,j,k \le p-1$. 

Assume that the characteristic function of $H$ contains a monomial $x^iy^jz^k$ of degree at least $2p-2$. Further, we may also assume that the total degree $i+j+k$ of this monomial is maximal.  
It follows from Proposition~\ref{lem:integral} that such a monomial is not orthogonal to $x^{p-1-i}y^{p-1-j}z^{p-1-k}$, 
while $x^{p-1-i}y^{p-1-j}z^{p-1-k}$ is orthogonal to every other monomial appearing in $m$ by Proposition \ref{lem:integral}, and by our maximality assumption on the degree of the monomial $x^iy^jz^k$. 
Therefore, no monomial of degree at least $2p-2$ can appear with a nonzero coefficient in $m$.

On the other hand, it is clear that every polynomial of degree at most $2p-3$ 
is orthogonal to $\mathcal{S}_1$, hence is contained in $\mathcal{S}_1^\perp$. 
This completes the proof.
\end{proof}

Now we prove a proposition that is not necessarily needed in order to finish the proof of our theorems. This result shows that a suitable generalisation of Proposition \ref{prop:gene_lines} to higher dimensional spaces is also possible to 
establish.
\begin{prop}\label{prop:egyeneseklinkomb}
 $$\mathcal{S}_2=\left\{ \mathbb{1}_\ell \mid \ell \in \mathcal{L}     \right\}=\left\{ m(x,y,z) \in \F_p[x,y,z] \mid \deg(m) \le 2p-2 \right\}.$$   
\end{prop}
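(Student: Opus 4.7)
The plan is to derive the result from Proposition~\ref{prop:gene_lines} together with the basic geometric fact that every affine line in $\F_p^3$ is the intersection of two affine planes, so $\1_\ell=\1_{H_1}\cdot\1_{H_2}$ as functions. This reduces the whole statement about $\mathcal{S}_2$ to a degree calculation on products of elements of $\mathcal{S}_1$.

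For the inclusion $\mathcal{S}_2\subseteq\{\deg\le 2p-2\}$, I would write any line as $\ell=H_1\cap H_2$; by Proposition~\ref{prop:gene_lines} the two plane characteristic functions have degree at most $p-1$, so their pointwise product has total degree at most $2p-2$ before reduction, and the reduction modulo $x^p-x$, $y^p-y$, $z^p-z$ does not increase the total degree (replacing $x^{p+r}$ by $x^{r+1}$ only decreases exponents). Taking $\F_p$-linear combinations preserves the bound.

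For the reverse inclusion, it suffices to show that every monomial $x^iy^jz^k$ with $0\le i,j,k\le p-1$ and $i+j+k\le 2p-2$ lies in $\mathcal{S}_2$. I would split the exponent as $(i,j,k)=(a,b,c)+(i-a,j-b,k-c)$ with both $a+b+c\le p-1$ and $(i-a)+(j-b)+(k-c)\le p-1$; such a splitting exists precisely when the total degree is at most $2(p-1)$, which is our hypothesis. Each of the two factors has degree at most $p-1$ and therefore lies in $\mathcal{S}_1$ by Proposition~\ref{prop:gene_lines}. By bilinearity, the span of products of elements of $\mathcal{S}_1$ is spanned by $\1_{H_1}\cdot\1_{H_2}=\1_{H_1\cap H_2}$, whose intersection is empty, a line, or (when $H_1=H_2$) a plane. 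Lines contribute to $\mathcal{S}_2$ by definition, and a plane decomposes into a disjoint union of $p$ parallel lines, so its characteristic function is also in $\mathcal{S}_2$. Hence every such monomial belongs to $\mathcal{S}_2$.

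I do not foresee a real conceptual obstacle; the only steps requiring any care are the degree bookkeeping after reduction modulo the field equations, and the elementary combinatorial observation that any triple of exponents summing to at most $2p-2$ admits a partition into two triples each summing to at most $p-1$ — which is exactly the statement that forces the threshold $2p-2$ to be sharp, mirroring how $p-1$ was sharp for $\mathcal{S}_1$ in Proposition~\ref{prop:gene_lines}.
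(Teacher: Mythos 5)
Your proposal is correct and follows essentially the same route as the paper: both directions rest on writing $\1_\ell=\1_{H_1}\cdot\1_{H_2}$, on the observation that a product of plane indicators is a linear combination of lines (empty set, a line, or a plane split into $p$ parallel lines), and on factoring a monomial of degree at most $2p-2$ into two factors of degree at most $p-1$ handled by Proposition~\ref{prop:gene_lines}. Your write-up is in fact slightly more explicit than the paper's (the exponent-splitting and the degree bookkeeping after reduction modulo the field equations are only implicit there), but there is no substantive difference in the argument.
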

\begin{proof}
    We  write the characteristic functions of lines as the product of characteristic functions of two planes. There is no canonical way of doing so but we may try to find a way that produces a lot of monomials of degree at most $2p-2$ but there is an easier approach to follow.  

    Observe that the intersection of two planes is always a linear combination of lines. Indeed, if the two planes are not parallel, then the intersection is a line. If the two planes are parallel and disjoint, when the intersection is the emptyset, then it is a trivial linear combination of lines. Finally, if the two planes coincide, then the intersection is a plane which is the sum of $p$ parallel lines.  

    Thus if we write $$\left( \sum_{\pi \in \mathcal{P}_1} \pi \right)\left( \sum_{\tilde{\pi} \in \mathcal{P}_2} \tilde{\pi} \right),$$
    where $\mathcal{P}_1$ and $\mathcal{P}_2$ are multisets of planes, and $\pi$ and $\tilde{\pi}$ denote both a plane and its characteristic function, then the product will be a linear combination of lines. 

Every monomial of degree at most $2p-2$ can be written as the product of two polynomials of degree at most $p-1$. Thus, by Proposition \ref{prop:gene_lines} we have that $S_2$ contains every monomial of degree at most $2p-2$. Since lines coincide with polynomials of degree $2p-2$ we obtain the required equality.  
\end{proof}
A standard way to proceed is by investigating the 'leading coefficient' of polynomials of degree at most $2p-2$, or in other words, a linear combination of lines.  This could also lead to a proof of Theorem \ref{thm:cilinderZ_plinkomb}.% whose proof is simpler than we initially thought. \todo{szerintem ez sidenote, nem kell a cikkbe, t.i. hogy mi hogy mikor jöttünk rá vmire}

\begin{prop}\label{prop:egyenes kulonbseg}
The $\F_p$--linear span of cylinder type multisets coincides with the $\F_p$--linear span of differences of parallel lines:
\[
\mathrm{Span}_{\F_p}\{\text{cylinder type multisets}\}
=\mathrm{Span}_{\F_p}\{\1_{\ell_1}-\1_{\ell_2}\mid \ell_1,\ell_2 \text{ are parallel lines}\}.
\]
\end{prop}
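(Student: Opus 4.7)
My plan is to establish both inclusions separately, each by a short manipulation that exploits the identity $p\cdot \1_\ell = 0$ in $\F_p$. No new machinery is needed beyond the definition of a cylinder type multiset and the observation that $p\equiv 0\pmod p$.

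For the containment $\supseteq$, I would observe that for any parallel lines $\ell_1 \parallel \ell_2$, the multiset consisting of one copy of $\ell_1$ together with $p-1$ copies of $\ell_2$ is a cylinder type multiset (a linear combination of $p$ pairwise parallel, possibly coinciding, lines), and modulo $p$ its weight function satisfies
\[
\1_{\ell_1} + (p-1)\,\1_{\ell_2} \;=\; \1_{\ell_1} - \1_{\ell_2}.
\]
Hence every generator of the span on the right already lies in the span on the left.

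For the containment $\subseteq$, I would start from an arbitrary cylinder type multiset $C = \sum_{i=1}^{p} \1_{\ell_i}$ whose $p$ defining lines are parallel (possibly with repetitions), fix $\ell_1$ as a reference line, and rewrite
\[
C \;=\; \sum_{i=1}^{p}\bigl(\1_{\ell_i} - \1_{\ell_1}\bigr) \;+\; p\cdot \1_{\ell_1}.
\]
Over $\F_p$ the last term vanishes, so $C$ is a sum of differences of parallel lines. Taking $\F_p$-linear combinations of cylinder type multisets then yields the required inclusion of spans.

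The proof is short, and I do not anticipate any nontrivial technical obstacle. The only conceptual point is to note that the clause "might be overlapping" in the definition of a cylinder type multiset allows the $p$ lines to coincide, which is precisely what makes the padding argument above (using $p-1$ copies of one line) legitimate and what turns the seemingly restrictive "exactly $p$ lines" condition into a flexible tool via $p\equiv 0\pmod p$.
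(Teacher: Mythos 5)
Your proof is correct and takes essentially the same route as the paper: the $\subseteq$ direction is the identical telescoping against a reference line, using that the multiplicities of the $p$ (parallel, possibly coinciding) lines sum to $p\equiv 0$ in $\F_p$. For $\supseteq$ you realize $\1_{\ell_1}-\1_{\ell_2}$ as a \emph{single} cylinder type multiset ($\ell_1$ together with $p-1$ copies of $\ell_2$), whereas the paper writes it as a difference of two cylinder type multisets sharing $p-1$ common lines --- a purely cosmetic variation resting on the same ``might be overlapping'' clause.
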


\begin{proof}
We first show that every difference of two parallel lines can be written as a difference of two cylinder type multiset.  
Indeed, consider two cylinder type multisets $C_1$ and $C_2$ whose supports consist of $p-1$ common parallel lines and differ only on two additional parallel lines $\ell_1$ and $\ell_2$.  
Then their weight functions differ exactly on $\ell_1$ and $\ell_2$, hence
\[
f_{C_1}-f_{C_2}=\1_{\ell_1}-\1_{\ell_2},
\]
so each $\1_{\ell_1}-\1_{\ell_2}$ is contained in the span of cylinders.

Conversely, let $C$ be any cylinder type multiset with fixed direction $v$. Then its weight function $f_C$ can be written as
\[
f_C=\sum_{\ell\parallel v} c(\ell)\,\1_\ell,
\]
where the sum runs over all $v$–parallel lines denoted by $\ell\parallel v$ and $c(\ell)\in\F_p$.  
Fixing one line $\ell_0\parallel v$, we can rewrite this as
\[
f_C=\sum_{\ell\parallel v} c(\ell)(\1_{\ell}-\1_{\ell_0})+\Bigl(\sum_{\ell\parallel v} c(\ell)\Bigr)\1_{\ell_0}=\sum_{\ell\parallel v} c(\ell)(\1_{\ell}-\1_{\ell_0})
\]
since $\sum_{\ell \parallel v} c(\ell)=0$ in $\F_p$.
%The first term is a linear combination of differences of parallel lines, while the second one is a multiple of $\1_{\ell_0}$, which itself lies in their span since
%\[
%\sum_{\ell\parallel v}(\1_\ell-\1_{\ell_0})=\1-\1_{\ell_0}.
%\]
%Hence $\1_C$ also belongs to the span of such differences. 
%This proves the equality of the two spans.
\end{proof}

%The following proposition .

%\todo[inline]{lin comb cilinder, cilinder functions, diff. parallel lines- kéne egy megyjegyzés}

The following proposition, which is the last ingredient to prove Theorem \ref{thm:cilinderZ_plinkomb}, shows that the algebraic description of
$\mathcal S_1^\perp$ obtained above admits an equivalent geometric formulation
in terms of cylinders and differences of parallel lines.

\begin{prop}\label{prop:vege}
    $$S_1^{\perp}=Span_{\F_p}\left\{ \mbox{cylinder type multisets} \right\}=Span_{\F_p} \left\{\1_{\ell_1}-\1_{\ell_2} \mid \ell_1 \mbox{ and } \ell_2 \mbox{ are parallel lines} \right\}.$$
\end{prop}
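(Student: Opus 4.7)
\emph{Proof plan.} The first equality is Proposition~\ref{prop:egyenes kulonbseg}, so the task reduces to showing $\mathcal{S}_1^{\perp} = V$, where $V := \Span_{\F_p}\{\1_{\ell_1} - \1_{\ell_2} \mid \ell_1 \parallel \ell_2\}$. The inclusion $V \subseteq \mathcal{S}_1^{\perp}$ follows from an elementary case analysis on an affine plane $H$ and a parallel pair $\ell_1, \ell_2$: either $H$ contains the common direction and $|\ell_i \cap H| \in \{0, p\}$ for both $i$, or $H$ is transverse to it and $|\ell_i \cap H| = 1$ for both. Either way $|\ell_1 \cap H| \equiv |\ell_2 \cap H| \pmod p$, so $\langle \1_{\ell_1} - \1_{\ell_2}, \1_H \rangle = 0$.

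For the reverse inclusion I plan to use duality: since the bilinear form $\langle\cdot,\cdot\rangle$ is non-degenerate, it suffices to prove $V^{\perp} \subseteq \mathcal{S}_1$. A function $f \in V^{\perp}$ is characterised by the condition that, for every direction $v$, the line-sum $g_v(x_0) := \sum_{t \in \F_p} f(x_0 + tv)$ is constant in $x_0$. Writing $f = \sum_I c_I X^I$ in multi-index notation with $I = (i_1, i_2, i_3)$, $i_m \le p-1$, expanding
\[
(x_0 + tv)^I = \sum_{J \le I} \binom{I}{J} x_0^{I - J} v^J t^{|J|}
\]
and using Proposition~\ref{powersum} to collapse the $t$-sum produces
\[
g_v(x_0) = -\sum_I c_I \sum_{\substack{J \le I \\ (p-1)\mid |J|,\, |J|>0}} \binom{I}{J} x_0^{I - J} v^J.
\]

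The constancy of $g_v$ in $x_0$ forces the vanishing of the coefficient of every $x_0^M$ with $M \ne 0$ as a polynomial in $v$; since $I$ is uniquely determined by $I - J = M$, reading off the coefficient of $v^J$ isolates the single term $c_{M+J}\binom{M+J}{J}$. For any $I$ with $|I| \ge p$, I would then choose $J \le I$ greedily with $|J| = p-1$, so that $M := I - J$ is non-zero (because $|M| = |I| - (p-1) \ge 1$) and $\binom{I}{J} = \prod_m \binom{i_m}{j_m}$ is non-zero modulo $p$ (each factor satisfies $0 \le j_m \le i_m \le p-1$). This forces $c_I = 0$ for every $I$ with $|I| \ge p$, so $\deg f \le p - 1$ and $f \in \mathcal{S}_1$; non-degeneracy of the form then yields $V = \mathcal{S}_1^{\perp}$. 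The main obstacle is the coefficient bookkeeping that extracts the single-term relation $c_{M+J}\binom{M+J}{J}=0$ from the polynomial constancy and the verification that the greedy decomposition $I = M+J$ with $|J|=p-1$ always succeeds for $|I| \ge p$; both are then straightforward combinatorial steps.
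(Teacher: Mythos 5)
Your proposal is correct, but it takes a genuinely different route from the paper. The paper proves both inclusions through the polynomial-degree picture: it first notes that a difference of parallel lines is a product of a difference of parallel planes (degree $\le p-2$) with a plane (degree $\le p-1$), hence has degree $\le 2p-3$ and lies in $\mathcal{S}_1^{\perp}$ by Proposition~\ref{prop:S_1meroleges}; for the converse it explicitly constructs every monomial of degree $\le 2p-3$ as a weighted sum of products of plane-differences with planes (equation~\eqref{eq:sikkulonbsegszorsik}), i.e.\ as a combination of line differences and planes. You instead prove $V\subseteq\mathcal S_1^{\perp}$ by a direct incidence count and then dualize: you characterize $V^{\perp}$ as the functions with constant line-sums in every direction, show by the coefficient-extraction argument that such functions have degree $\le p-1$ and hence lie in $\mathcal S_1$ (Proposition~\ref{prop:gene_lines}), and conclude $\mathcal S_1^{\perp}\subseteq V^{\perp\perp}=V$ from non-degeneracy. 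Your key computation is sound: the polynomial in $v$ giving the coefficient of $x_0^M$ is reduced and has no constant term (every $v^J$ has $|J|\ge p-1>0$), so its vanishing on $\F_p^3\setminus\{0\}$ forces all coefficients $c_{M+J}\binom{M+J}{J}$ to vanish, and the greedy choice of $J$ with $|J|=p-1$ succeeds whenever $|I|\ge p$, with $\binom{I}{J}=\prod_m\binom{i_m}{j_m}\not\equiv 0\pmod p$ since each $i_m\le p-1$. What each approach buys: the paper's argument is constructive (it exhibits each monomial of degree $\le 2p-3$ explicitly as a combination of line differences) and reuses Proposition~\ref{prop:S_1meroleges}, which is needed elsewhere; your argument bypasses both that proposition and the explicit generating computation, and as a by-product yields the clean dual statement $V^{\perp}=\mathcal S_1$, i.e.\ the span of planes is exactly the set of functions whose line-sums are constant in every direction.
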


\begin{proof}
We may represent the difference of two parallel lines as the intersection of a plane with the difference of two parallel planes. Thus, any differences of parallel lines can be written in the following general form 
\begin{equation*}
%\begin{split}
    \left( 1- \left( a_1x+b_1 y+c_1 z +d \right)^{p-1} - 1+ \left( a_1x+b_1 y+c_1 z +e \right)^{p-1}   \right)    \left( 1-\left( a_2x +b_2y+c_2z+g \right)^{p-1}\right).
 %   \end{split}
\end{equation*}
It is easy to see that such a polynomial is of degree at most $2p-3$ which shows that the subspace generated by the differences of pairwise orthogonal lines is contained in the set of polynomials of degree at most $2p-3$, which is equal to  $Span_{\F_p}\left\{ \mbox{cylinder type multisets} \right\}$ by Proposition \ref{prop:S_1meroleges} and Proposition \ref{prop:gene_lines}.

It remains to see that every monomial of degree at most $2p-3$ can be written as a linear combination of differences of parallel lines. In order to establish this statement, we calculate the following formal expression.
     \begin{equation}\label{eq:sikkulonbsegszorsik}
\begin{split}
&\sum_{a_1,b_1,c_1 \in \F_p}a^{s_1}b^{t_1}c^{u_1} \left( \left(1- \left( a_1x+b_1 y+c_1 z +1 \right)^{p-1}   \right)-\left(1- \left( a_1x+b_1 y+c_1 z \right)^{p-1}   \right) \right) \cdot \\
 & \sum_{a_2,b_2,c_2\in \F_p} a_2^{s_2}b_2^{t_2}c_2^{u_2} \left(1-(a_2 x+b_2 y+c_2 z)^{p-1} \right) .
\end{split}
\end{equation}
Now it follows from Proposition \ref{prop:gene_lines} that any monomial of degree at most $p-1$ can be written as the linear combination polynomials corresponding to planes. A similar computation shows that monomials of degree at most $p-2$ can be written as the linear combination of differences of parallel planes just as we do in \eqref{eq:sikkulonbsegszorsik}. Using the distributive law we may see that the formula in equation \eqref{eq:sikkulonbsegszorsik} is a linear combination of differences of parallel lines, and planes. Now, any plane is the $\F_p$-linear combination of differences of parallel lines, finishing the proof of Proposition \ref{prop:vege}. 
\end{proof}

\textbf{Proof of Theorem \ref{thm:cilinderZ_plinkomb}}
Let $f_S$ be the weight functions (over $\F_p$) of a $p$-divisible multiset. The hypothesis that $|H \cap S|\equiv 0 \pmod p$ for every affine plane $H$
is equivalent to
\[
\sum_{x \in \F_p^3} f_S(x)\,\1_H(x) = 0 \quad \text{in } \F_p
\quad\text{for all affine planes }H,
\]
i.e.
\[
f_S \perp \1_H \quad \text{for all planes } H,
\] 
where orthogonality is taken over $\F_p$. % meaning
%\(\langle f,g\rangle := \sum_{x\in\F_p^3} f(x)g(x)=0 \text{ in } \F_p.\)
In other words,
\[
f_S \in \mathcal{S}_1^\perp,
\qquad
\mathcal{S}_1 := \text{Span}_{\F_p}\{\1_H : H \text{ affine plane in } \F_p^3\}.
\]

%By Proposition~\ref{prop:gene_lines}, $\mathcal{S}_1$ is exactly the space of all polynomials
%in $\F_p[x,y,z]$ of degree at most $p-1$. Hence $\mathcal{S}_1^\perp$ is the space of all
%functions orthogonal to every polynomial of degree $\le p-1$.
Proposition~\ref{prop:S_1meroleges} identifies this orthogonal complement:
\[
\mathcal{S}_1^\perp
=
\{ m(x,y,z)\in\F_p[x,y,z] : \deg m \le 2p-3 \}.
\]

We now describe $\mathcal{S}_1^\perp$ geometrically. 
Proposition~\ref{prop:egyenes kulonbseg} shows that the $\F_p$–linear span of cylinder type multisets
(that is, characteristic functions of unions of $p$ parallel lines, possibly with multiplicity)
coincides with the $\F_p$–linear span of all differences $\1_{l_1}-\1_{l_2}$ of parallel lines.
Finally, Proposition~\ref{prop:vege} states that
\[
\mathcal{S}_1^\perp
=
\text{Span}_{\F_p}\{\text{cylinder type multisets}\}
=
\text{Span}_{\F_p}\{\1_{\ell_1}-\1_{\ell_2} : \ell_1 \parallel \ell_2\}.
\]

Since $f_S \in \mathcal{S}_1^\perp$, this implies that $f_S$ is an $\F_p$–linear combination
of cylinder sets; equivalently, it is an $\F_p$–linear combination of differences
of parallel lines. This proves the theorem.
\qed

\section{Cylinder type sets over \texorpdfstring{$\Z$}{}}\label{sec:5}
%Integer liftings of cylinder relations}

In the previous section, we described the orthogonal complement
$\mathcal S_1^\perp$ as the $\F_p$--linear span of cylinder type
configurations, or equivalently, as the $\F_p$--linear span of
differences of parallel affine lines.

In the present section, we strengthen this description by passing from
$\F_p$--linear relations to integer ones.
More precisely, we show that $\F_p$--linear combinations of characteristic functions of parallel lines can be lifted to integer combinations.
In the case of sets, this lifting can be carried out in a constructive
way, leading to $\{0,1\}$--valued characteristic functions.
For multisets, an analogous---and formally stronger---statement holds,
although our corresponding argument is no longer constructive.
\subsection{Auxiliary lemmata}
\begin{lemma}[Point difference lemma]\label{lemma:point-diff}
Let $a,b\in\F_p^3$ be distinct points.
Then
\[
e_{a,b}:=p\,\1_a-p\,\1_b\in
\Span_{\Z}\{\1_{\ell_1}-\1_{\ell_2}~\colon ~ \ell_1\parallel \ell_2\}.
\]
\end{lemma}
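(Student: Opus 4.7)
The plan is to reduce the lemma to a $2$-dimensional computation inside a
single affine plane. Fix any affine plane $H\subseteq \F_p^3$ with
$\{a,b\}\subseteq H$; since lines of $H$ are affine lines of $\F_p^3$ and
parallelism inside $H$ agrees with parallelism in $\F_p^3$, it suffices to
express $p\1_a-p\1_b$ as a $\Z$-combination of differences of parallel lines
lying in $H$.

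The explicit construction I would use is the following. Let $v:=b-a$ and let
$\ell_{a,b}\subseteq H$ be the line through $a$ and $b$ (of direction $v$).
In $H$ there are $p+1$ directions in total; I sum over the $p$ remaining
directions $w\not\parallel v$. For each such $w$ let $m_a(w)$ and $m_b(w)$
be the (parallel, disjoint) lines of $H$ through $a$ and $b$, respectively,
in direction $w$. The identity I would verify is
\[
  \sum_{w\not\parallel v}\bigl(\1_{m_a(w)}-\1_{m_b(w)}\bigr)
  \;=\; p\,\1_a-p\,\1_b,
\]
which presents the left-hand side visibly as an element of
$\Span_{\Z}\{\1_{\ell_1}-\1_{\ell_2}~\colon~\ell_1\parallel\ell_2\}$ with
coefficients in $\{+1\}$.

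To check the identity I would simply evaluate both sides pointwise at every
$c\in\F_p^3$. The point $a$ lies on each of the $p$ lines $m_a(w)$ and on
no $m_b(w)$ (otherwise $m_b(w)$ would pass through both $a$ and $b$ and have
direction $v$, a contradiction), contributing $+p$; symmetrically $b$
contributes $-p$. For $c\in\ell_{a,b}\setminus\{a,b\}$ the $H$-lines from
$a$ and from $b$ to $c$ both have direction $v$, so $c$ appears in no
summand. For $c\in H\setminus\ell_{a,b}$ there is a unique direction $w$
with $c\in m_a(w)$ and a unique $w'$ with $c\in m_b(w')$, both different
from $v$, so the contributions cancel. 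Points outside $H$ are trivially
uncovered.

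I do not foresee a genuine obstacle: the argument is a direct double
counting inside $H$, relying only on the fact that two distinct points of
$H$ determine a unique affine line. The only point worth emphasising in the
write-up is the opening reduction to a single plane, justified by the
observation that parallelism and the notion of an affine line are inherited
from the ambient space $\F_p^3$, so lines and parallel pairs produced in $H$
remain legitimate witnesses for membership in the target $\Z$-span.
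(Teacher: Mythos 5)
Your construction is exactly the paper's: fix a plane $H$ through $a$ and $b$, and for each of the $p$ directions in $H$ other than that of the line $ab$, pair the line through $a$ with the parallel line through $b$ with weights $+1$ and $-1$; your pointwise verification just spells out what the paper leaves implicit. Correct, and essentially identical to the paper's proof.
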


\begin{proof}
Fix an affine plane $H$ containing $a$ and $b$.
For every line $\ell\subset H$ through $a$ not containing $b$,
there exists a unique line $\ell'\subset H$ through $b$ parallel to $\ell$.
Assign weight $+1$ to $\ell$ and $-1$ to $\ell'$.
Summing over all such pairs yields a function supported on $\{a,b\}$,
taking value $p$ at $a$ and $-p$ at $b$.
\end{proof}

\begin{lemma}[$p$--times zero--sum lemma]\label{lemma:p-zero-sum}
Let $u:\F_p^3\to\Z$ satisfy $\sum_{x\in\F_p^3}u(x)=0$.
Then
\[
p\,u\in
\Span_{\Z}\{\1_{\ell_1}-\1_{\ell_2}~ \colon ~ \ell_1\parallel \ell_2\}.
\]
\end{lemma}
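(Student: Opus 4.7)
The plan is to reduce the statement directly to the preceding Point difference lemma, which already produces $p\1_a-p\1_b$ inside $\Span_{\Z}\{\1_{\ell_1}-\1_{\ell_2}:\ell_1\parallel\ell_2\}$ for any two points $a,b$. Since that span is a $\Z$-module, any $\Z$-linear combination of expressions of the form $p\1_a-p\1_b$ again lies in it, so it suffices to write $pu$ as such a combination.

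First I would fix an arbitrary base point $x_0\in\F_p^3$ and use the zero-sum hypothesis to rewrite $u$ as a telescoping sum over points. Concretely, since $\sum_{x\in\F_p^3}u(x)=0$, one has
\[
u \;=\; \sum_{x\in\F_p^3} u(x)\,\1_x \;=\; \sum_{x\in\F_p^3} u(x)\bigl(\1_x-\1_{x_0}\bigr),
\]
where the second equality uses that $\sum_x u(x)\,\1_{x_0}=0$. Multiplying by $p$ gives
\[
p\,u \;=\; \sum_{x\in\F_p^3} u(x)\,\bigl(p\1_x-p\1_{x_0}\bigr) \;=\; \sum_{x\in\F_p^3} u(x)\,e_{x,x_0},
\]
with the convention that $e_{x_0,x_0}=0$.

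Then I would apply Lemma~\ref{lemma:point-diff} to each term $e_{x,x_0}$, which gives an explicit $\Z$-combination of differences of parallel lines producing $p\1_x-p\1_{x_0}$. Collecting these $\Z$-combinations, weighted by the integers $u(x)$, exhibits $p\,u$ as an element of $\Span_{\Z}\{\1_{\ell_1}-\1_{\ell_2}:\ell_1\parallel\ell_2\}$, completing the proof.

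There is really no serious obstacle here: the zero-sum hypothesis is exactly what is needed to express $u$ as a $\Z$-combination of point differences $\1_x-\1_{x_0}$, and the factor $p$ is exactly what is needed so that each point difference becomes accessible via Lemma~\ref{lemma:point-diff}. The only mild subtlety is the legitimate use of a fixed base point $x_0$, which works because the single term $\bigl(\sum_x u(x)\bigr)\1_{x_0}$ vanishes under the hypothesis.
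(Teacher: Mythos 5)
Your proof is correct and follows essentially the same route as the paper: both decompose $u$ into point differences (the paper writes $u=\sum_k(\1_{a_k}-\1_{b_k})$ generically, you telescope against a fixed base point $x_0$) and then apply the Point difference lemma term by term. The base-point version is just a slightly more explicit instance of the same decomposition, so there is nothing to add.
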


\begin{proof}
Since $\sum_{x\in \F_p^3} u(x)=0$, the function $u$ can be written as a finite sum
\[
u=\sum_{k=1}^N(\1_{a_k}-\1_{b_k})
\]
of point differences ($a_k,b_k \in \F_p^3$).
Multiplying by $p$ and applying Lemma~\ref{lemma:point-diff} to each term
proves the claim.
\end{proof}

\subsection{Proof of Theorem~\ref{thm:cilinderZlinkomb}}

Let $S\subseteq\F_p^3$ be a $p$--divisible \emph{set} of size $p^2$ and
$f=\1_S$.

\medskip
\textbf{Step 1: Reduction modulo $p$ and lifting.}
Since $f\in\mathcal S_1^\perp$, by the results of the previous section we may write
\[
f\equiv \1_H+\sum_i \alpha_i(\1_{\ell_i}-\1_{\ell_i'})
\pmod p,
\]
where $H$ is an affine plane and $\ell_i\parallel \ell_i'$ are parallel lines.
Lifting the coefficients $\alpha_i\in\F_p$ to integers in
$\{0,1,\dots,p-1\}$ yields an integer--valued function $g$ with
$g\equiv f\pmod p$.

\medskip
\textbf{Step 2: Eliminating negative values.}
The function $g$ satisfies
\begin{equation}\label{eq:lemma5.2}
\sum_{x \in \F_p^3} g(x)=p^2, \mbox{ and}
\qquad
g(x)\equiv 0\textrm{ or }1\pmod p.
\end{equation}
If $g$ is nonnegative, then necessarily $g=\1_S$ and we are done.
Otherwise, choose $c$ with $g(c)<0$ and $d$ with $g(d)\ge p$.
Replacing $g$ by $g+e_{c,d}$, where $e_{c,d}$ is defined in
Lemma~\ref{lemma:point-diff}, preserves both congruence and total sum conditions in equation $\eqref{eq:lemma5.2}$,
while strictly decreasing
\[
\sum_{x~:~g(x)<0}|g(x)|.
\]
Iterating this procedure yields a $\{0,1\}$--valued function.

\medskip
\textbf{Step 3: Conclusion.}
\medskip
\textbf{Step 3: Conclusion.}
The resulting function can be written as the characteristic function of
an affine plane plus an integer linear combination of differences of
parallel lines.
Since an affine plane is the disjoint union of $p$ parallel lines, its
characteristic function is that of a cylinder.

Moreover, the differences of parallel lines lie in the $\Z$--linear span of
cylinders, as follows from the fact that such differences can be realized
as differences of two cylinders with a common $(p-1)$--line intersection.
Consequently, $\1_S$ belongs to the $\Z$--linear span of cylinders
(and hence also of cylinder--type multisets).
\qed

\qed

\subsection{Proof of Theorem~\ref{thm:multiset-cylinder}}

Let $w:\F_p^3\to\Z_{\ge0}$ be a $p$--divisible multiset with total weight
$p^2$.

\medskip
\textbf{Step 1: Reduction modulo $p$ and lifting.}
Let $r$ be a function from $\F_p^3$ to $\F_p$ defined as  $r(x)\equiv w(x)\pmod p$ for every $x\in \F_p^3$.
Then $r\in\mathcal S_1^\perp$, hence
\[
r\in
\Span_{\F_p}\{\1_{\ell_1}-\1_{\ell_2}~ \colon ~\ell_1\parallel \ell_2\}.
\]
Choosing such a representation and lifting coefficients to
$\{0,1,\dots,p-1\}$ yields
\[
g\in
\Span_{\Z}\{\1_{\ell_1}-\1_{\ell_2} ~ \colon ~ \ell_1\parallel \ell_2\},
\qquad
g\equiv r\pmod p,
\]
with $\sum_{x\in \F_p^3} g(x)=0$.

\medskip
\textbf{Step 2: Extracting the $p$--multiple part.}
Since $w-g(x)$ is divisible by $p$ for every $x \in \F_p^3$, there exists $t:\F_p^3\to\Z$ such that
\[
w-g=p\,t.
\]
Simple calculation shows $\sum_{x\in \F_p^3} t(x)=p$.
Fix an affine plane $H$ and a line $\ell_0\subset H$.
Then $u:=t-\1_{\ell_0}$ satisfies $\sum_{x\in \F_p^3} u(x)=0$, hence by
Lemma~\ref{lemma:p-zero-sum},
\[
p\,u\in
\Span_{\Z}\{\1_{\ell_1}-\1_{\ell_2} ~ \colon ~ \ell_1\parallel \ell_2\}.
\]

\medskip
\textbf{Step 3: Removing the plane component.}
Writing $H$ as the disjoint union of the $p$ lines parallel to $\ell_0$ gives
\[
\1_H-p\,\1_{\ell_0}\in
\Span_{\Z}\{\1_{\ell_1}-\1_{\ell_2} ~ \colon ~ \ell_1\parallel \ell_2\}.
\]
Combining the above relations yields
\[
w-\1_H =g+pu-(\1_H-p\, \1_{\ell_0} )\in
\Span_{\Z}\{\1_{\ell_1}-\1_{\ell_2} ~ \colon ~ \ell_1\parallel \ell_2\},
\]
so $w$ is a $\Z$--linear combination of cylinders.
\qed

%\section{Dimension}

%\todo[inline]{Ádám-féle dimenziószámítás}

%In the next two theorems, we calculate the dimension of the subspace generated by the characteristic function of cylinders over $\F_p$ and  describe  min generating system  over $\mathbb{Z}$. 

%\begin{theorem}
%   For the $S \in \mathcal{S}$ cylinder sets it holds that $ \mod p$
%   \begin{displaymath}
%       a
%   \end{displaymath}
%\end{theorem}

\section{Connection to vanishing sums of roots of unities}\label{sec:6}
It is natural to examine finite sums of the complex roots of the unity and ask ourselves whether the result is zero. Classical treatment of the problem is due to de Bruin \cite{deb}, Rédei \cite{redeivanishing}, Schoenberg \cite{schoen},  and Lam-Leung \cite{Lam}, whose results we describe below. 

Further, this question is interesting since the values of characters of the complex irreducible representations of finite groups are the sums of roots of unity. The Fourier transform of a function on a finite abelian group is described using characters, and this is a fundamental and classical tool in understanding tilings of finite abelian groups \cite{F1,KMMS, LL1, LL2, LM, Mal}. 
%A similar phenomenon arises in the case of nonabelian and rather symmetric groups as well, but these are recent results. 
Another recent result in this topic is due to %Christopher
Herbig \cite{herbig}, which gives an upper bound on the number of minimal $k$-term vanishing sums of roots of unity containing $1$ as a summand and uses it to disprove a conjecture of Hung and Tiep \cite{tiep}. Notice that the finiteness of this number is already non-trivial. 

Since the sums we consider are finite \footnote{infinite would not converge anyway}, we may restrict ourselves to $m$'th roots of unity for large enough positive integer $m$.
It is easy to see that the sum of all $k$'th roots of unity is zero if $k \ge 2$, and we apply this observation for the prime  divisors $p_1,p_2, \ldots, p_k$ of $m$. 

We can look at the $m$'th roots of unity as a multiplicative group which is a cyclic group of order $m$. The $k$'th roots of unity form a subgroup for every $k \mid m$ and what we have seen is that sums of elements of any subgroup is equal to 0. Then the same holds if we add elements of any of these cosets of nontrivial subgroups. 

We treat the cyclic group $\Z_m$ as the direct sum of cyclic groups of order $p_i^{\alpha_i}$ and then we may embed $\Z_m$ into $\mathbb{R}^k$ as lattice points. Notice that subgroups of order $p_i$ coincide with chosen grid points in a line parallel to one of the axis. The same actually holds for cosets of subgroup or order $p_i$ and their sum is also equal to zero. These subsets of the grid points will be called \textit{fibers}. Now if we want to obtain a multiset (which we imagine as a nonnegative integer valued function on $\Z_m$) such that the corresponding weighted sums of roots of unity is equal to zero, then we may take any linear combination of characteristic function of fibers or simply linear combination of fibers.

It was proved in \cite{deb, F1, Lam, schoen} that every multiset whose corresponding sum of roots of unities is equal to zero can be written as the linear combination of fibers. 
Notice that the same problem can be formulated for grid points contained in a suitable dimensional rectangle so we may change the sidelengths to any composite numbers. This was investigated by Coppersmith and Steinberger \cite{Stein}. They proved a lower bound for the size (sum of entries) of minimal sets corresponding the vanishing sums of roots of unities that do not contain a complete fiber. 

Coppersmith and Steinberger called the linear combination of fibers a cyclotomic array, although it was first introduced in \cite{steinberger}. They proved that any minimal cyclotomic array, which is not a coset, contains at least $(p-1)(q-1)+(r-1)$ points, where $p$ and $q$ are the sizes of the shortest sides of the paralellograms. 

Notice that in the case of $\F_p^3$ we obtain that the smallest set contained in $\F_p^3$ which is the linear combination of fibers is of size at least $p(p-1)$. 
Let us take two skew lines with weight $-1$ and let us establish bijection of the points of these two lines. Now we add lines determined by this bijection (these pair of points determine lines) with weight 1 to the sum of the first two lines. Thus we obtain a multiset of size $p(p-1)$. We verified that if these two lines are $\{ (i,0,0) \mid i \in \F_p \}$ and $\{ (0,1,i) \mid i \in \F_p \}$, then of the bijection is the natural one, then the resulting set of size $p(p-2)$ is actually a set not containing a complete line. The same fact was established for \textit{any bijection} by D\'avid Foris \cite{Foris}.% who is a \red{first year} student at ELTE. 

It was proved in \cite{BDMS, samthesis} that if a set satisfying the conditions of the cylinder set conjecture and contains a line, then it is indeed the union of $p$ parallel lines.  
Thus we raise here the following natural question. 
\begin{question}
What is the size of the smallest set in $\F_p^3$ that is the linear combination of (not necessarily parallel to the axis) lines?     
\end{question}

\section{Relation to Rédei's conjecture}\label{sec:7}

We begin with a conjecture of Rédei concerning additive decompositions in vector spaces over prime fields.
\begin{conj}[Rédei \cite{redei}]
If $A,B \subseteq \mathbb{F}_p^3$ satisfy $A+B = \mathbb{F}_p^3$ and $0\in A, 0\in B$, then $\langle A\rangle \ne \mathbb{F}_p^3$ or $\langle B\rangle \ne \mathbb{F}_p^3$.
\end{conj}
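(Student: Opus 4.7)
The plan is an attempt to derive Rédei's conjecture from the structural theorems of Sections~\ref{sec:3}--\ref{sec:5}, by contradiction. Assume $A+B=\F_p^3$, $0\in A\cap B$, and $\langle A\rangle=\langle B\rangle=\F_p^3$, and set
\[
r(x):=\#\{(a,b)\in A\times B : a+b=x\},
\]
so that $r(x)\ge 1$ pointwise and $\sum_x r(x)=|A|\,|B|$. For every affine plane $H\subseteq\F_p^3$ one has
\[
\sum_{x\in H}r(x)=\sum_{a\in A}|B\cap(H-a)|,
\]
so the plane--slice profile of $r$ is governed by how $B$ meets the $A$-translates of $H$.

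The central idea is to manufacture from $(A,B)$ a nonzero $p$-divisible multiset $S$ on $\F_p^3$, apply Theorem~\ref{thm:cilinderZ_plinkomb} to write $S$ as an $\F_p$-linear combination of differences $\1_{\ell_1}-\1_{\ell_2}$ of parallel lines, and then show that the spanning hypotheses on $A$ and $B$ are incompatible with any such cylinder--type decomposition. Natural candidates for $S$ are: (i)~$r-c\cdot\1_{\F_p^3}$ for a suitable constant $c$; (ii)~a convolution $\lambda*\1_B$ with a low-degree weighting $\lambda\colon A\to\F_p$ chosen via Proposition~\ref{prop:S_1meroleges} so that the result lies in $\mathcal{S}_1^{\perp}$; or (iii)~a Fourier-weighted difference of representation functions exploiting the vanishing--sums--of--roots--of--unity machinery recalled in Section~\ref{sec:6}.

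Once such an $S$ is in hand, the final step is to read off from its cylinder decomposition a small family of directions $v_1,\dots,v_k\in\F_p^3\setminus\{0\}$ along which $A$ or $B$ is forced to be a union of affine lines parallel to some $v_i$. This would place one of $A$, $B$ inside a proper affine subspace, yielding $\langle A\rangle\ne\F_p^3$ or $\langle B\rangle\ne\F_p^3$, the desired contradiction.

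The main obstacle, and where I expect the argument to be most delicate, is producing a nonzero $p$-divisible witness $S$ in the first place. The naive choice $r-c\cdot\1$ is typically \emph{not} $p$-divisible, because $\sum_{a\in A}|B\cap(H-a)|\bmod p$ depends on finer arithmetic of $B$ not controlled by the covering hypothesis alone. A refined construction requires finding a nontrivial $\lambda\colon A\to\F_p$ with $\lambda*\1_B\in\mathcal{S}_1^{\perp}$; however, the existence of such a $\lambda$ is itself essentially a Rédei-type statement about spanning configurations, so this approach may shift the difficulty rather than remove it. A genuinely new additive-combinatorial input---plausibly a strengthened cyclotomic-array lower bound along the lines of the question closing Section~\ref{sec:6}, or a suitable sharpening of Theorem~\ref{thm:multisetcylinder} relating the cylinder directions of $S$ to the affine hull of its support---seems necessary to close the argument.
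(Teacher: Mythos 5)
First, a point of orientation: the statement you set out to prove is presented in the paper as a \emph{conjecture} of R\'edei, and the paper does not prove it. Section~\ref{sec:7} only gives a conditional derivation: assuming both the strong cylinder conjecture and a directional dichotomy for sets of size $p$ in $\F_p^3$ (Conjecture~6.2.7 of Mattheus's thesis, explicitly flagged there as open), R\'edei's conjecture follows via Sands' tiling criterion, the resulting direction-exclusion principle, a projection to $\F_p^2$, and R\'edei's planar theorem. So there is no complete proof in the paper to compare yours against; the relevant question is whether your plan closes the gap, and it does not.

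Your proposal has two genuine gaps, the first of which you name yourself. (1) You need a nonzero $p$-divisible multiset $S$ manufactured from the pair $(A,B)$, and none of your three candidates is shown to exist; as you observe, $\sum_{a\in A}|B\cap(H-a)| \bmod p$ is not controlled by the covering hypothesis, and the existence of a suitable weighting $\lambda$ is itself essentially a R\'edei-type statement, so the difficulty is displaced rather than resolved. (2) More seriously, even granting such an $S$, the final step cannot work as described: Theorem~\ref{thm:cilinderZ_plinkomb} says that \emph{every} $p$-divisible multiset is an $\F_p$-linear combination of differences of parallel lines, so the mere existence of a cylinder-type decomposition of $S$ carries no information whatsoever about $A$ or $B$ --- in particular it cannot ``force'' either set into a proper affine subspace. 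To extract a contradiction one would need the far stronger conclusion that $B$ itself is an honest cylinder, i.e.\ a union of $p$ parallel lines as a set; that is exactly the content of the strong cylinder conjecture, which the paper's Theorems~\ref{thm:cilinderZ_plinkomb}--\ref{thm:multiset-cylinder} deliberately weaken. The paper's conditional route is structurally different from yours: it never introduces the representation function $r$, but instead uses the tiling identities $|A|\,|B|=|G|$ and $(A-A)\cap(B-B)=\{0\}$ to show that the direction sets of $A$ and $B$ are disjoint, invokes the (conjectural) dichotomy for $|A|=p$ to force $B$ to determine few directions, applies the (conjectural) strong cylinder conjecture to make $B$ a genuine union of $p$ parallel lines, and finishes by projecting to $\F_p^2$ and applying R\'edei's planar theorem. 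Your plan, by contrast, tries to run entirely inside the $\F_p$-linear algebra of $\mathcal{S}_1^{\perp}$, where --- as the paper's own results show --- spanning and non-spanning configurations are indistinguishable.
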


Motivated by this conjecture, we explain how a combination of two geometric–combinatorial statements imply Rédei’s conjecture. The first statement is the strong cylindrical set conjecture, and the second is an assumption on the number of directions determined by a set of cardinality 
$p$ in $\F_p^3$.	
The connection between these ideas appears in the Phd thesis of Sam Mattheus \cite{samthesis}.% in smaller details with a large amount of other details.  \footnote{We learned the existence of the connection from him during his visit to the Rényi Institute\url{https://erdoscenter.renyi.hu/articles/fourier-analysis-and-additive-problems-2024-spring-jan-june}}.
We present below a self-contained and compact argument. Connecting to this renowned conjecture of Rédei, we can say the following.
If the strong cylinder conjecture holds, then in combination with the assumption that every $T \subseteq \mathbb{F}_p^3$ of size $|T|=p$, which is not contained in a plane, determines at least $p+2$ directions, it would imply a conjecture of Rédei concerning additive decompositions in $\mathbb{F}_p^3$.

%In order to have a complete picture regarding the connection of the cylindrical set conjecture and Rédei's tiling conjecture on $\F_p^3$ we present an argument showing that a combination of the strong cylindrical set conjecture and knowledge on the number of directions determined by a set of size $p$ implies Rédei's conjecture. \todo{Le van ez barhol is irva? Mattheus teziseben ctrl+f alapjan nincs, szoval meg akar hasznos is lehet az irasban torteno rogzitese}

%Notice that we learned about this connection from Sam Mattheus when he visited us at the Rényi Institute\footnote{programot megnevezni} although the following is our argument. 

We present the argument in 4 steps. 

\textbf{Step 1: Preliminaries on tilings in finite abelian groups.} Recall that a pair of subsets $A,B$ of a finite abelian group $G$ forms a \emph{tile} if
\[
A \oplus B = G ,
\]
i.e.,\ the representation $g=a+b$ with $a\in A$, $b\in B$ is unique for every $g\in G$. 
It is a classical result observed by Sands \cite{Sands} that this holds if and only if
\[
|A||B| = |G|
\qquad\text{and}\qquad
(A-A)\cap (B-B)=\{0\}.
\]
A second standard fact is that tilings are preserved under certain multiplications: if $A\oplus B=G$ and $\gcd(r,|G|)=1$, then
\[
rA\oplus B = G,
\]
where $rA=\{ra : a\in A\}$ and $ra=\underbrace{a+a+ \ldots +a}_{r \text{ copies}}$.

Specializing to the case $G=\mathbb{F}_p^3$, these two observations imply that the sets of directions determined by $A$ and by $B$ are disjoint. 
We refer to this as the \emph{direction exclusion principle}.

\bigskip

\textbf{Step 2: Directions determined by small sets in $\mathbb{F}_p^3$.}

The second statement we use is about subsets of $\F_p^3$ of cardinality $p$. We emphasize that it is not yet known whether the statement is true or false, but we will show how it can be used to prove Rédei's conjecture if it proves to be true.  %\todo{ez a chatgpt irta, de ez igaz, es inkabb hol van leirva}
\begin{conj}[\cite{samthesis}, Conjecture 6.2.7]
\textbf{Directional dichotomy for sets of size $p$.}
Let $A\subseteq\mathbb{F}_p^3$ with $|A|=p$. Then either  
\begin{enumerate}
\item[(i)] $A$ is contained in a plane, or  
\item[(ii)] $A$ determines at least $p+1$ (possibly at least $p+2$) directions.  
\end{enumerate}
\end{conj}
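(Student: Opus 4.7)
The plan is to argue by contradiction. Suppose $|A|=p$, that $A$ is not contained in any affine plane, and that $|D_A|\le p$; we aim for a contradiction. The idea is to exploit the abundance of non-determined directions by projection and reduce to the planar direction theorem of Rédei--Lovász--Schrijver. For each $v\in N_A$ the projection $\pi_v\colon \F_p^3\to\F_p^3/\langle v\rangle\cong\F_p^2$ is injective on $A$ (otherwise some pair would determine $v$), so $\pi_v(A)$ is a $p$-element subset of $\F_p^2$. If $\pi_v(A)$ were collinear then its preimage would be an affine plane containing $A$, contradicting the hypothesis; hence $\pi_v(A)$ is non-collinear, and the planar Rédei--Szőnyi--Gács theorem yields $|D_{\pi_v(A)}|\ge (p+3)/2$. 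Every direction downstairs lifts to at least one direction upstairs, because each pair in $\pi_v(A)$ is the image of a pair in $A$; formally the induced map $\tilde\pi_v\colon D_A\to \PG(1,p)$ satisfies $\tilde\pi_v(D_A)\supseteq D_{\pi_v(A)}$, giving the single-projection bound $|D_A|\ge (p+3)/2$.

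Bridging the gap from $(p+3)/2$ to the conjectured $p+1$ is the real work. My first attempt would be a double counting of triples $(v,u,d)$ with $v\in N_A$, $u\in D_{\pi_v(A)}$, and $d\in D_A$ projecting to $u$, weighted by the multiplicities $m_{v,u}=|\{\{a,a'\}\subseteq A : \pi_v(a-a')\parallel u\}|$; the slack in the inclusion $\tilde\pi_v(D_A)\supseteq D_{\pi_v(A)}$ is controlled by how often distinct elements of $D_A$ collapse in a fibre of $\pi_v$, and aggregating across all $v\in N_A$ should sharpen the global bound. A complementary tool is the Megyesi--Sziklai classification of near-extremal configurations in planar Rédei: if $|D_{\pi_v(A)}|$ is close to $(p+3)/2$, then $\pi_v(A)$ is essentially an affine image of the quadratic residues, a rigid algebraic structure. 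Imposing such rigidity simultaneously for many projection centres $v$ should force $A$ into an affine plane.

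The main obstacle is precisely that naive aggregation of the single-projection bound is too lossy: the fibres of $\tilde\pi_v$ can collapse several elements of $D_A$ onto the same downstairs direction, so $|\tilde\pi_v(D_A)|\ge (p+3)/2$ does not immediately combine across $v\in N_A$. A genuinely three-dimensional input is required -- either a refined incidence count using several projection centres at once, or a $3$D Rédei polynomial method in which the factorisation pattern of a suitable polynomial $R_A(X_1,X_2,Y,Z)\in\F_p[X_1,X_2,Y,Z]$ in the variables $(X_1,X_2)$ encodes the event $v\in N_A$, so that a large $|N_A|$ is translated into a polynomial identity forcing a linear dependence among the points of $A$. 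Making either strategy rigorous is the hard step, which is why the statement currently stands as a conjecture.
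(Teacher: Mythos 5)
This statement is presented in the paper as an open conjecture (Conjecture 6.2.7 of Mattheus's thesis); the authors explicitly write that ``it is not yet known whether the statement is true or false'' and they only use it as a hypothesis in Section 7. So there is no proof in the paper to compare against, and your proposal does not close the gap either --- as you yourself acknowledge in the final sentence. To be precise about what your sketch does and does not establish: the projection step is sound. If $|A|=p$, $A$ is not contained in a plane, and $v$ is a non-determined direction (such $v$ exist, since $|D_A|\le p$ forces $|N_A|\ge p^2+1$), then $\pi_v$ is injective on $A$, $\pi_v(A)$ is a non-collinear $p$-set in $\F_p^2$, and the R\'edei--Megyesi/Lov\'asz--Schrijver bound gives $|D_{\pi_v(A)}|\ge (p+3)/2$; since determined directions lift, $|D_A|\ge (p+3)/2$. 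That is a correct, but strictly weaker, conclusion than the conjectured $p+1$.

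The remainder of your proposal --- the weighted double count over triples $(v,u,d)$, and the idea of imposing the Megyesi--Sziklai rigidity of near-extremal planar configurations simultaneously for many projection centres --- is a plausible programme but not an argument: you give no mechanism that prevents the fibres of $\tilde\pi_v$ from collapsing many elements of $D_A$, and no quantitative statement of the stability result you would need, so nothing beyond $(p+3)/2$ is actually proved. You correctly identify this as the missing ingredient. The honest summary is that you have rediscovered the easy half of the problem and located the genuine obstruction; the conjecture itself remains open, and no blind proof attempt should be expected to succeed here.
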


This will allow us to reduce Rédei’s conjecture to the strong cylindrical set conjecture.

\bigskip

\textbf{Step 3: From direction exclusion to cylindrical structure.}

Assume now that
\[
A\oplus B = \mathbb{F}_p^3,
\qquad |A|=p,\; |B|=p^2,
\]
and that $A$ determines at least $p$ directions.

By the direction exclusion principle, $B$ determines \emph{no} such directions; hence $B$ determines strictly fewer than $p^2+2$ directions. 
By a result in \cite{BBBSS}---presented there with a proof using Rédei polynomials and said to be folklore---any subset of $\mathbb{F}_p^3$ of size $p^2$ that determines fewer than $p^2+2$ directions must satisfy the hypotheses of the strong cylindrical set conjecture. 
Consequently, assuming the strong cylindrical set conjecture, $B$ must be a union of $p$ parallel lines. 
After an affine change of coordinates, we may suppose these lines are vertical.

Let $\bar A,\bar B\subseteq\mathbb{F}_p^2$ denote the support of the vertical projections of $A$ and $B$, respectively. 
Because the lines in $B$ are vertical and disjoint, $|\bar B|=p$. 
The direction-exclusion principle implies that  $\bar A$ is a set, and so $|\bar A|=p$ as well.

Since projection is a group homomorphism and being a multitile is preserved under homomorphisms, we have
\[
\bar A \oplus \bar B = \mathbb{F}_p^2 .
\]
\textbf{Step 4: Application of Rédei’s theorem in the plane.}
Rédei’s classical theorem in $\mathbb{F}_p^2$ states that if two sets of cardinality $p$ are not contained in lines, then each of them determines at least $\frac{p+3}{2}$ directions. 
But then the direction exclusion principle in $\mathbb{F}_p^2$ would forbid tiling. 
Therefore, in any tiling $\bar A\oplus\bar B=\mathbb{F}_p^2$, at least one of the sets $\bar A,\bar B$ must be contained in a line.

If $\bar A$ is contained in a line, then $A$ is contained in a plane in $\mathbb{F}_p^3$.  
If $\bar B$ is contained in a line, then $B$ is contained in a plane in $\mathbb{F}_p^3$.

In either case, one of $A,B$ is contained in a proper subgroup of $\mathbb{F}_p^3$, proving Rédei’s conjecture under the combined assumptions (strong cylindrical set conjecture together with a lower bound on directions determined by sets of size $p$).

%\textcolor{red}{Gondolatmenet: \begin{itemize}
% \item If $T\bigoplus S= F_p^3$, then $rT\bigoplus S=F_p^3$ $D(T)\cap D(S)=\emptyset$.
% \item Ha $|T|=p$ és nem egyenesen van $\to$ S cilider.
% \item 2 d-ben S metszete +T vetülete egy tile, így S vagy T a vetületben egy egyenes.  
% \item Ha T vetülete egyenes, akkor T síkban van.
% \item Ha S metszete egy egyenes, akkor S egy sík. 
%\end{itemize}}

%\todo[inline]{KG: cilinder typusú halmazokkal foglalkoztunk; mi az ami elegendő lenne ahhoz hogy befejezzük a bizonyítást (súlyösszeg csökkentés plusz: ha kicsi a súlyösszeg, akkor befejezhető) \\ NZ: - erről online azt beszéltük, hogy talán hagyjuk inkább. Szerintem lehet hogy a Réde kapcs mégis ide kéne az introból, mert az intro elég terjedelmes lett, és azt a kapcsolatot valóban érdemes rendesen leírni.\\
%KG: Rendben. Ide teszem be és rendesen kiírom, ami ott csak egy listában van összegezve.}

%\todo[inline]{Coppersmith et al típusú eredményekkel való kapcsolat}

%\todo[inline]{Példák: Cilinder halmaz közel minimális mérettel; Multihalmaz; Más? }

%\begin{conj}[Rédei \cite{redei}] If $A+B=\F_p^3$, with $0\in A, 0\in B$, then $\langle A\rangle \neq \F^3$ or  $\langle B\rangle \neq \F^3$
    
%\end{conj}

\end{document}